\numberwithin{equation}{section}
\theoremstyle{definition}
\newtheorem{theorem}[equation]{Theorem}
\newtheorem{lemma}[equation]{Lemma}
\newtheorem{example}[equation]{Example}
\newtheorem{definition}[equation]{Definition}
\newtheorem{proposition}[equation]{Proposition}
\newtheorem{remark}[equation]{Remark}
\renewcommand{\phi}{\varphi}
\newcommand{\I}{{\rm i}}
\renewcommand{\(}{\bigl(}
\renewcommand{\)}{\bigr)\vphantom{)}}
\newcommand{\ip}[2]{\langle#1,#2\rangle}
\newcommand{\const}{\operatorname{const}}
\newcommand{\spn}{\operatorname{span}}
\newcommand{\rank}{\operatorname{rank}}
\newcommand{\Ker}{\operatorname{Ker}}
\renewcommand{\Im}{\operatorname{Im}}
\newcommand{\One}{{1\hskip-2.5pt{\rm l}}}
\newcommand{\ga}{\gamma}
\newcommand{\de}{\delta}
\newcommand{\al}{\alpha}
\newcommand{\be}{\beta}
\newcommand{\Ec}{\mathcal E}
\newcommand{\A}{\mathcal A}
\newcommand{\B}{\mathcal B}
\newcommand{\la}{\lambda}
\newcommand{\C}{\mathbb C}
\newcommand{\dimensional}[1]{$#1$\nobreakdash-\hspace{0pt}dimensional}
\renewcommand*\l@section[2]{%
  \ifnum \c@tocdepth >\z@
    \addpenalty\@secpenalty
    \addvspace{0.25em \@plus\p@}%
    \setlength\@tempdima{2.5em}%
    \begingroup
      \parindent \z@ \rightskip \@pnumwidth
      \parfillskip -\@pnumwidth
      \leavevmode \bfseries
      \advance\leftskip\@tempdima
      \hskip -\leftskip
      #1\nobreak\hfil \nobreak\hb@xt@\@pnumwidth{\hss #2}\par
    \endgroup
  \fi}
\renewcommand*\numberline[1]{\hb@xt@\@tempdima{\hfil#1\hskip1em}}
\begin{document}

\title{Graded algebras and subproduct systems: dimension two}

\author{Boris Tsirelson}

\date{}
\maketitle

\begin{abstract}
Objects dual to graded algebras are subproduct systems of linear spaces, a
purely algebraic counterpart of a notion introduced recently in the context of
noncommutative dynamics (Shalit and Solel \cite{SS}, Bhat and Mukherjee
\cite{BM}). A complete classification of these objects in the lowest
nontrivial dimension is given in this work, triggered by a question of Bhat
\cite{B}.
\end{abstract}

\setcounter{tocdepth}{1}
\tableofcontents

\section[Definition and theorem]
 {\raggedright Definition and theorem}
\label{sec:1}
By a subproduct system I mean in this work a discrete-time subproduct system
of two-dimensional linear spaces over $ \C $, defined as follows.

\begin{definition}\label{subproduct_system}
A \emph{subproduct system} consists of two-dimensional linear spaces $ E_t $
for $ t = 1,2,\dots $ and injective linear maps
\[
\be_{s,t} : E_{s+t} \to E_s \otimes E_t
\]
for $ s,t \in \{1,2,\dots\} $, satisfying the associativity condition:
the diagram\footnote{%
 Of course, $ \One_t : E_t \to E_t $ is the identity map.}
\[
\xymatrix{
 & E_{r+s+t} \ar[dl]_{\be_{r+s,t}} \ar[dr]^{\be_{r,s+t}}
\\
 E_{r+s} \otimes E_t \ar[dr]_{\be_{r,s}\otimes\One_t} & & E_r \otimes E_{s+t}
 \ar[dl]^{\One_r\otimes\be_{s,t}}
\\
 & E_r \otimes E_s \otimes E_t
}
\]
is commutative for all $ r,s,t \in \{1,2,\dots\} $.
\end{definition}

The dual object is the graded algebra\footnote{%
 For the definition see e.g.\ \cite[Chapter XVI, Sect. 4]{McL}.}
$ \A $ whose homogeneous component $ \A_t $ is the linear space $ E'_t $ of
all linear functions $ E_t \to \C $, and the multiplication map $ \A_s \otimes
\A_t \to \A_{s+t} $ is the dual to $ \be_{s,t} $. (And in addition, $ \A_0 =
\C $.) The associativity condition stipulated above for the subproduct system
is evidently equivalent to the usual associativity of $ \A $, and the
injectivity of $ \be_{s,t} $ --- to surjectivity of its dual. Thus, the graded
algebra $ \A $ satisfies
\begin{equation}\label{cond_for_A}
\begin{gathered}
\A_t \text{ is two-dimensional (as a linear space)} \, , \\
\text{the multiplication map } \A_s \otimes \A_t \to \A_{s+t} \text{ is surjective}
\end{gathered}
\end{equation}
for all $ s,t \in \{1,2,\dots\} $.

Here is a simple construction of a graded algebra. Given an algebra $ D $
(over $ \C $, associative, not necessarily with unit) and its automorphism $
\eta $, we let
\begin{equation}\label{the_construction}
\begin{gathered}
\B_t = D \quad \text{for } t = 1,2,\dots \, , \\
x \times_\B y = x \times_D \eta^s (y) \quad \text{for } x \in \B_s, \, y
 \in \B_t \, ;
\end{gathered}
\end{equation}
here ``$ \times_\B $'' and ``$ \times_D $'' stand for multiplication in $ \B
$ and $ D $ respectively. (And in addition, $ \B_0 = \C $.) Associativity is
easy to check: for $ x \in \B_r $, $ y \in \B_s $, $ z \in \B_t $,
\[
( x \times_\B y ) \times_\B z = x \times_D \eta^r (y) \times_D \eta^{r+s} (z)
= x \times_D \eta^r ( y \times_D \eta^s (z) ) = x \times_\B ( y \times_\B z )
\, .
\]
Assuming that $ D $ satisfies
\begin{equation}\label{cond_for_B}
\begin{gathered}
D \text{ is two-dimensional (as a linear space)} \, , \\
\text{the multiplication map } D \otimes D \to D \text{ is surjective}
\end{gathered}
\end{equation}
we get $ \B $ satisfying \eqref{cond_for_A} (see Lemma \ref{5.8}).

Surprisingly, this construction exhausts all such graded algebras.

\begin{theorem}\label{main_theorem}
For every graded algebra $ \A $ satisfying \eqref{cond_for_A} there exist an
algebra $ D $ satisfying \eqref{cond_for_B} and its automorphism $ \eta $ such
that the construction \eqref{the_construction} gives a graded algebra $ \B $
isomorphic to $ \A $.
\end{theorem}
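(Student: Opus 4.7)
An isomorphism $\A\cong\B$ amounts to a family of linear isomorphisms $\psi_t\colon D\to\A_t$, $t\ge1$, intertwining the two multiplications:
\[
\psi_{s+t}\(x\cdot_D\eta^s(y)\)=\psi_s(x)\,\psi_t(y)\quad\text{for all }x,y\in D.
\]
So the task is to manufacture the triple $(D,\eta,\{\psi_t\})$ directly from $\A$. A structural feature of $\B$ guides the construction: since $x\times_\B y=x\cdot_D\eta(y)$ for $x\in\B_1$, $y\in\B_t$ does not involve $t$, left multiplication by a fixed $a\in\B_1$ acts on every $\B_t=D$ through the \emph{same} endomorphism $x\mapsto a\cdot_D\eta(x)$.

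I would therefore take $D:=\A_1$ as a vector space, set $\psi_1=\mathrm{id}$, and select the remaining $\psi_t$'s so that, after transport, left multiplication by any $\alpha\in\A_1$ becomes a $t$-independent endomorphism of $D$. Concretely, pick a ``sufficiently generic'' $e\in\A_1$ for which each $L_e\colon\A_t\to\A_{t+1}$ is bijective, and define $\psi_{t+1}$ inductively by requiring $\psi_{t+1}^{-1}\circ L_e\circ\psi_t$ to be a fixed endomorphism of $D$. The product $\cdot_D$ is then forced by the $s=t=1$ case of the intertwining identity, and $\eta$ is pinned down by comparing the right multiplications $R_\alpha\colon\A_t\to\A_{t+1}$ at successive $t$, since in $\B$ one has $R_a^{(t)}(x)=x\cdot_D\eta^t(a)$. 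Associativity in $\A$ on triples of mixed degrees translates, under these identifications, into associativity of $\cdot_D$ together with the automorphism property of $\eta$.

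The main obstacle is to verify that \emph{all} the compatibility conditions---$t$-independence of the transported left multiplications for \emph{every} $\alpha$, $\eta$-controlled dependence of the transported right multiplications, and associativity for mixed-degree triples---are simultaneously realisable by a single pair $(\cdot_D,\eta)$. I expect this to require a case analysis keyed on the quadratic kernel $K_2:=\ker m_{1,1}\subset\A_1\otimes\A_1$: the ``generic'' case where a good $e$ exists is handled by the construction sketched above, while degenerate configurations of $K_2$ (for instance those forcing every $L_e$ to drop rank on some $\A_t$) must be treated by a modified construction, e.g.\ replacing left by right multiplication, or prescribing $D$ and $\eta$ by hand and verifying the isomorphism directly.
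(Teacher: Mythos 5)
There is a genuine gap: your plan defers exactly the content of the theorem. Normalizing $\psi_{t+1}:=L_e\circ\psi_t\circ\lambda^{-1}$ so that the single operator $L_e$ becomes $t$-independent gives, by itself, no control whatsoever over the transported left multiplications $\psi_{t+1}^{-1}L_\alpha\psi_t$ for the other $\alpha\in\A_1$, nor over the right multiplications, nor over products $\A_s\otimes\A_t\to\A_{s+t}$ with $s\ge2$; the sentence ``associativity in $\A$ on triples of mixed degrees translates \dots{} into associativity of $\cdot_D$ together with the automorphism property of $\eta$'' is precisely the assertion to be proved, and you give no argument for it. Moreover the existence of a ``sufficiently generic'' $e$ cannot be extracted from \eqref{cond_for_A} alone: in the algebra $\B_4$ of Sect.~\ref{sec:7} (the one built from $D_3$) the product of $(a_1,a_2)\in\A_s$ with $(b_1,b_2)\in\A_t$ is $(a_1b_1,a_2b_1)$, so \emph{every} left multiplication $L_a:\A_t\to\A_{t+1}$ has rank one, and your generic-case construction is vacuous; since a priori you do not know which isomorphism class $\A$ belongs to, the ``degenerate configurations of $K_2$'' are not an afterthought to be ``treated by a modified construction or prescribed by hand'' --- the case analysis on $K_2$, which you only announce, is the whole problem.

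For comparison, the paper's proof rests on two ingredients for which your sketch offers no substitute. First, the classification of the low-degree data: the triple $E_1=\A'_1$, $E_2=\Im M'_{1,1}$, $E_3=\Im M'_{1,1,1}$ satisfies \eqref{3.25}, and Proposition \ref{3.14} identifies it with a triple coming from some $(D,\eta)$; this classification is powered by the Main Lemma \ref{1.1} (existence of a product vector in $(L_{12}\otimes L_3)\cap(L_1\otimes L_{23})$, proved via the determinant identity $D_8+D_4=0$), which is what makes the rank-by-rank analysis of $\Ker M_{1,1}$ work. Second, a rigidity statement saying that degrees $1$ and $2$ determine everything: Proposition \ref{5.4} extends the degree-$\le 2$ isomorphism to all degrees, and it needs the kernel condition \eqref{5.2}, which is verified for $\B$ (not assumed for $\A$) via Lemmas \ref{5.9} and \ref{6.1}. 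Your inductive construction of the $\psi_t$ is in effect an attempt to redo this second step by hand, but without an analogue of the first ingredient --- some structural fact about $\Ker M_{1,1}$ forcing the existence of multiplicative functionals or product vectors --- there is no visible route to verifying the ``simultaneous realizability'' of all your compatibility conditions even in the generic case. As it stands, this is a plausible programme, not a proof.
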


There exist four mutually non-isomorphic algebras $ D $ satisfying
\eqref{cond_for_B}; one of them has infinitely many automorphisms, another one
has two automorphisms, the other two have only trivial automorphisms. Thus,
all graded algebras satisfying \eqref{cond_for_A} are classified, and all
subproduct systems are also classified.

However, the proof of Theorem \ref{main_theorem} given in Sect.~\ref{sec:6} is
preceded by the classification of subproduct systems given in
Sect.~\ref{sec:4}.

\section[Main lemma]
 {\raggedright Main lemma}
\label{sec:2}
Let $ L_1, L_2, L_3 $ be two-dimensional linear spaces over the field $ \C $
of complex numbers (or any other algebraically closed field of characteristic
not $ 2 $). Let two-dimensional linear subspace $ L_{12} $ of the tensor
product $ L_1 \otimes L_2 $ be given, and similarly $ L_{23} \subset L_2
\otimes L_3 $, $ \dim L_{23} = 2 $. Then $ L_{12} \otimes L_3 $ is a
\dimensional{4} subspace of the \dimensional{8} space $ L_1 \otimes L_2
\otimes L_3 $, and $ L_1 \otimes L_{23} \subset L_1 \otimes L_2 \otimes L_3 $
is another \dimensional{4} subspace. Their intersection is in general trivial
(that is, \dimensional{0}), but may also be nontrivial.

\begin{lemma}\label{1.1}
If $ ( L_{12} \otimes L_3 ) \cap ( L_1 \otimes L_{23} ) $ contains a non-zero
vector, then it contains a non-zero product vector.
\end{lemma}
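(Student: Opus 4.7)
The plan is to reduce to a simpler sublemma: every two-dimensional subspace $L$ of a tensor product $M_1 \otimes M_2$ with $\dim M_i = 2$ contains a non-zero product vector. This holds because the determinant, viewed as a quadratic map $M_1 \otimes M_2 \to \wedge^2 M_1 \otimes \wedge^2 M_2 \cong \C$, restricts on $L$ to a quadratic form on a two-dimensional space; over an algebraically closed field of characteristic not $2$ such a form either vanishes identically (so every non-zero element of $L$ is a product) or admits a non-zero isotropic direction.

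Fix a non-zero $v \in W$ and introduce the contraction
\[
T_3 : L_3^* \to L_1 \otimes L_2, \qquad \phi \mapsto (\mathrm{id} \otimes \phi)(v),
\]
whose image lies in $L_{12}$ by hypothesis. The main case is when I can find $\phi_0 \in L_3^* \setminus \{0\}$ with $T_3(\phi_0) = x_1 \otimes x_2$ a non-zero product vector of $L_{12}$; the sublemma applied to the pulled-back quadratic form $\det \circ T_3$ on $L_3^*$ produces such a $\phi_0$ whenever $T_3$ has rank $2$, and the only obstruction is $T_3$ having rank one with non-product image (postponed). Extending $\phi_0$ to a basis of $L_3^*$ dual to a basis $f_1, f_2$ of $L_3$, and picking a basis $x_1, x_1'$ of $L_1$, I write
\[
v = x_1 \otimes x_2 \otimes f_1 + u_2 \otimes f_2, \quad u_2 \in L_{12}, \quad u_2 = x_1 \otimes \alpha + x_1' \otimes \beta,
\]
and the condition $v \in L_1 \otimes L_{23}$ delivers
\[
r := x_2 \otimes f_1 + \alpha \otimes f_2 \in L_{23}, \quad s := \beta \otimes f_2 \in L_{23}.
\]

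From $r$ and $s$ I extract a product vector in $L_{23}$ whose $L_2$-factor is proportional to $x_2$. If $\beta$ is a non-zero scalar multiple of $x_2$, then $s$ is proportional to $x_2 \otimes f_2$, yielding the product $x_1 \otimes x_2 \otimes f_2 \in W$. If $\{x_2, \beta\}$ is a basis of $L_2$, I write $\alpha = \alpha_1 x_2 + \alpha_2 \beta$ and observe that $r - \alpha_2 s = x_2 \otimes (f_1 + \alpha_1 f_2) \in L_{23}$, producing the product $x_1 \otimes x_2 \otimes (f_1 + \alpha_1 f_2) \in W$.

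Two degenerate cases remain. First, when $\beta = 0$: if also $\alpha$ is a scalar multiple of $x_2$, then $v$ itself is a product vector; otherwise $v = x_1 \otimes r$ with $r$ non-product, so $T_1 : L_1^* \to L_2 \otimes L_3$ has rank one with non-product image, and I invoke the symmetric version of the next argument. Second, when $T_3$ has rank one with non-product image $u \in L_{12}$: then $v = u \otimes x_3$, and decomposing $u$ along a basis of $L_1$ shows that $v \in L_1 \otimes L_{23}$ forces $L_{23} = L_2 \otimes x_3$; hence $W = L_{12} \otimes x_3$, and the sublemma applied to $L_{12}$ furnishes a product vector in $W$. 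The central obstacle is the compatibility of the $L_2$-factors between $L_{12}$ and $L_{23}$: the sublemma on $L_{12}$ alone produces a product vector $x_1 \otimes x_2$ with no a priori control over $x_2 \in L_2$, and the proof succeeds because, once bases are adapted to this product vector, the residual data of the single tensor $v$ automatically places a matching product vector with the same $L_2$-factor $x_2$ inside $L_{23}$.
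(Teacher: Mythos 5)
Your proof is correct, and it takes a genuinely different route from the paper. The paper reduces the lemma to a polynomial identity: it encodes ``$L_{12}$ contains a product vector with prescribed $L_2$-factor $x_2$'' and the analogous condition for $L_{23}$ as two binary quadratic forms in $x_2$, observes that a common zero exists iff their resultant $D_4$ vanishes, encodes the nontriviality of the intersection as the vanishing of an $8\times 8$ determinant $D_8$, and then verifies the identity $D_8+D_4=0$ by computer algebra (with a manual Laplace-expansion check in the appendix). You instead slice the given nonzero vector $v$ along $L_3$ and $L_1$ and run a case analysis whose only nontrivial input is the elementary sublemma that every two-dimensional subspace of a $2\times 2$ tensor product contains a nonzero product vector (a binary quadratic form over $\C$ has a nontrivial zero) --- the same fact the paper establishes in Section 3 via the rank classification. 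I checked your cases: the main case correctly produces $x_2\otimes(f_1+\alpha_1 f_2)\in L_{23}$ (or $x_2\otimes f_2\in L_{23}$) matching the factor $x_2$ of the product vector $x_1\otimes x_2\in L_{12}$; the two degenerate branches ($v=x_1\otimes r$ with $r$ of rank two, and $v=u\otimes x_3$ with $u$ of rank two) correctly force $L_{12}=x_1\otimes L_2$ resp.\ $L_{23}=L_2\otimes x_3$ by a dimension count, after which the sublemma finishes; and the case split on the rank of the contraction $T_3$ is exhaustive. What your approach buys is a conceptual, computation-free proof that explains \emph{why} the matching $L_2$-factor appears; what the paper's approach buys is the stronger exact identity $D_8=-D_4$ relating the resultant to the $8\times 8$ determinant, which is more information than the lemma itself requires.
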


In other words: \dots then there exist non-zero vectors $ x_1 \in L_1 $, $ x_2
\in L_2 $, $ x_3 \in L_3 $ such that $ x_1 \otimes x_2 \in L_{12} $ and $ x_2
\otimes x_3 \in L_{23} $. 

\begin{remark}
The conditions $ \dim L_{12} = 2 $, $ \dim L_{23} = 2 $ may be weakened: $
\dim L_{12} \ge 2 $, $ \dim L_{23} \ge 2 $. Of course, the intersection cannot
be trivial when $ \dim L_{12} + \dim L_{23} > 4 $. On the other hand, the
lemma fails when $ \dim L_{12} = 1 $, $ \dim L_{23} = 2 $ (try $ L_{23} = L_2
\otimes x_3 $).
\end{remark}

\begin{proof}
We take $ u_{12}, v_{12} \in L'_1 \otimes L'_2 $ such that $ L_{12} = \{
u_{12}, v_{12} \}^\perp $, that is, $ L_{12} = \{ z : \ip{u_{12}}z = 0 =
\ip{v_{12}}z \} $, and $ L_{23} = \{ u_{23}, v_{23} \}^\perp $. We take also
bases $ \{ u_1,v_1 \} $ in $ L'_1 $, $ \{ u_2,v_2 \} $ in $ L'_2 $, $ \{
u_3,v_3 \} $ in $ L'_3 $. Then $ L_{12} \otimes L_3 = \{ u_{12} \otimes u_3,
u_{12} \otimes v_3, v_{12} \otimes u_3, v_{12} \otimes v_3 \}^\perp $ and $
L_1 \otimes L_{23} = \{ u_1 \otimes u_{23}, u_1 \otimes v_{23}, v_1 \otimes
u_{23}, v_1 \otimes v_{23} \}^\perp $. The condition $ ( L_{12} \otimes L_3 )
\cap ( L_1 \otimes L_{23} ) \ne \{ 0 \} $ implies that these eight vectors ($
u_{12} \otimes u_3, \dots, v_1 \otimes v_{23} $) are linearly dependent. In
coordinates it means that the corresponding determinant $ 8 \times 8 $
vanishes, see \eqref{det8} below.

On the other hand, the relation $ x_1 \otimes x_2 \in L_{12} $ holds if (and
only if) $ \ip{ u_{12} }{ x_1 \otimes x_2 } = 0 = \ip{ v_{12} }{ x_1 \otimes
x_2 } $, that is, $ \ip{ U_{12} x_2 }{ x_1 } = 0 = \ip{ V_{12} x_2 }{ x_1 } $,
where linear maps $ U_{12}, V_{12} : L_2 \to L'_1 $ correspond to the given $
u_{12}, v_{12} \in L'_1 \otimes L'_2 $ in the sense that $ \ip{ U_{12} x_2 }{
x_1 } = \ip{ u_{12} }{ x_1 \otimes x_2 } $ for all $ x_1 \in L_1 $ (and the
same for $ V $). Existence of such $ x_1 \ne 0 $ (for a given $ x_2 $) holds
if (and only if) vectors $ U_{12} x_2 $ and $ V_{12} x_2 $ are linearly
dependent. In coordinates it means that the corresponding determinant $ 2
\times 2 $ vanishes, see \eqref{det2} below. The determinant is a quadratic
form of $ x_2 $.

Similarly, a vector $ x_3 \ne 0 $ such that $ x_2 \otimes x_3 \in L_{23} $
exists if and only if another quadratic form vanishes at $ x_2 $. We have to
prove that these two quadratic forms vanish simultaneously on some non-zero
vector of the two-dimensional space $ L_2 $. A sufficient (and necessary)
condition is that the resultant of these two quadratic forms
vanishes.\footnote{%
 See e.g.\ \cite[Chapter IV, Sect. 27]{Wa}.}
The resultant is the determinant of a $ 4 \times 4 $ matrix whose elements are
quadratic forms of $ u_{12}, v_{12}, u_{23}, v_{23} $.

We turn to coordinates;
\begin{align*}
u_{12} &= a u_1 \otimes u_2 + b u_1 \otimes v_2 + c v_1 \otimes u_2 + d v_1
 \otimes v_2 \, , \\
v_{12} &= e u_1 \otimes u_2 + f u_1 \otimes v_2 + g v_1 \otimes u_2 + h v_1
 \otimes v_2 \, , \\
u_{23} &= A u_2 \otimes u_3 + B v_2 \otimes u_3 + C u_2 \otimes v_3 + D v_2
 \otimes v_3 \, , \\
v_{23} &= E u_2 \otimes u_3 + F v_2 \otimes u_3 + G u_2 \otimes v_3 + H v_2
 \otimes v_3
\end{align*}
for some $ a,b,c,d,e,f,g,h,A,B,C,D,E,F,G,H \in \C $. Using the basis
\begin{multline*}
\( u_1 \otimes u_2 \otimes u_3, u_1 \otimes v_2 \otimes u_3, v_1 \otimes u_2
 \otimes u_3, v_1 \otimes v_2 \otimes u_3, \\
u_1 \otimes u_2 \otimes v_3, u_1 \otimes v_2 \otimes v_3, v_1 \otimes u_2
 \otimes v_3, v_1 \otimes v_2 \otimes v_3 \)
\end{multline*}
in $ L'_1 \otimes L'_2 \otimes L'_3 $ we see that the following determinant
(denote it $ D_8 $) vanishes:
\begin{equation}\label{det8}
\begin{array}{lccccccccccc}
 & & & uuu & uvu & vuu & vvu & uuv & uvv & vuv & vvv & \\
 & & & & & & & & & & & \\
u_{12} \otimes u_3 & & \vline & a & b & c & d & 0 & 0 & 0 & 0 & \vline \\
v_{12} \otimes u_3 & & \vline & e & f & g & h & 0 & 0 & 0 & 0 & \vline \\
u_{12} \otimes v_3 & & \vline & 0 & 0 & 0 & 0 & a & b & c & d & \vline \\
v_{12} \otimes v_3 & & \vline & 0 & 0 & 0 & 0 & e & f & g & h & \vline \\
u_1 \otimes u_{23} & & \vline & A & B & 0 & 0 & C & D & 0 & 0 & \vline \\
u_1 \otimes v_{23} & & \vline & E & F & 0 & 0 & G & H & 0 & 0 & \vline \\
v_1 \otimes u_{23} & & \vline & 0 & 0 & A & B & 0 & 0 & C & D & \vline \\
v_1 \otimes v_{23} & & \vline & 0 & 0 & E & F & 0 & 0 & G & H & \vline
\end{array}
\end{equation}
(The additional row and column outside the determinant are just comments.)

Vectors
\begin{align*}
U_{12} x_2 &= \ip{ au_2+bv_2 }{ x_2 } u_1 + \ip{ cu_2+dv_2 }{ x_2 } v_1 \, ,
 \\
V_{12} x_2 &= \ip{ eu_2+fv_2 }{ x_2 } u_1 + \ip{ gu_2+hv_2 }{ x_2 } v_1
\end{align*}
are linearly dependent when the determinant
\begin{equation}\label{det2}
\begin{vmatrix}
\ip{ au_2+bv_2 }{ x_2 } & \ip{ cu_2+dv_2 }{ x_2 } \\
\ip{ eu_2+fv_2 }{ x_2 } & \ip{ gu_2+hv_2 }{ x_2 }
\end{vmatrix}
\end{equation}
vanishes, which is the condition for existence of $ x_1 $. Similarly, $ x_3 $
exists when the determinant
\[
\begin{vmatrix}
\ip{ Au_2+Bv_2 }{ x_2 } & \ip{ Cu_2+Dv_2 }{ x_2 } \\
\ip{ Eu_2+Fv_2 }{ x_2 } & \ip{ Gu_2+Hv_2 }{ x_2 }
\end{vmatrix}
\]
vanishes.

We rewrite \eqref{det2} as
\[
p \ip{ u_2 }{ x_2 }^2 + q \ip{ u_2 }{ x_2 } \ip{ v_2 }{ x_2 } + r \ip{ v_2 }{ x_2 }^2
\]
where
\begin{equation}\label{2.5}
p = \begin{vmatrix} a & c \\ e & g \end{vmatrix} \, , \quad
q = \begin{vmatrix} a & d \\ e & h \end{vmatrix} + \begin{vmatrix} b & c \\ f
  & g \end{vmatrix} \, , \quad
r = \begin{vmatrix} b & d \\ f & h \end{vmatrix}
\end{equation}
are the coefficients of the quadratic form \eqref{det2}. Similarly, the other
quadratic form has the coefficients
\begin{equation}\label{2.6}
P = \begin{vmatrix} A & C \\ E & G \end{vmatrix} \, , \quad
Q = \begin{vmatrix} A & D \\ E & H \end{vmatrix} + \begin{vmatrix} B & C \\ F
  & G \end{vmatrix} \, , \quad
R = \begin{vmatrix} B & D \\ F & H \end{vmatrix} \, .
\end{equation}
The resultant of these two quadratic forms is
\begin{equation}\label{2.7}
D_4 = \begin{vmatrix}
p & q & r & 0 \\ 0 & p & q & r \\ P & Q & R & 0 \\ 0 & P & Q & R
\end{vmatrix}
\end{equation}
Now we may forget all the linear spaces and retain only the two determinants
(as functions of $ a,b,c,d,e,f,g,h,A,B,C,D,E,F,G,H $): $ D_8 $ defined by
\eqref{det8}, and $ D_4 $ defined by \eqref{2.5}--\eqref{2.7}. It is
sufficient to prove that $ D_4 = 0 $ whenever $ D_8 = 0 $.

Surprisingly,
\[
D_8 + D_4 = 0
\]
for all $ a,b,c,d,e,f,g,h,A,B,C,D,E,F,G,H $. This equality can be checked
using a computer algebra system (I did so); it is also feasible (but tedious)
to check it manually, see Appendix.
\end{proof}

\section[Two tensor factors]
 {\raggedright Two tensor factors}
\label{sec:3}
Let $ L_1, L_2 $ be two-dimensional linear spaces over $ \C $, and $ L_{12}
\subset L_1 \otimes L_2 $, $ \dim L_{12} = 2 $.

The space $ L_1 \otimes L_2 $ carries a quadratic form $ A : L_1 \otimes L_2
\to \C $ that vanishes on all product vectors $ x_1 \otimes x_2 $ and only
these vectors. The form $ A $ is unique up to a coefficient. In terms of a
basis $ \{x_1,y_1\} $ of $ L_1 $ and a basis $ \{x_2,y_2\} $ of $ L_2 $,
\[
A ( \al x_1 \otimes x_2 + \be x_1 \otimes y_2 + \ga y_1 \otimes x_2 + \de y_1
\otimes y_2 ) = \const \cdot \begin{vmatrix} \al & \be \\ \ga & \de
\end{vmatrix} \, .
\]
The rank of the form $ A $ is equal to $ 4 $. The restriction of $ A $ to $ L_{12}
$ may be of rank $ 0 $, $ 1 $, or $ 2 $. Accordingly we say that $ L_{12} $ is of
rank $ 0 $, $ 1 $, or $ 2 $. For some basis $ \{\xi,\psi\} $ of $ L_{12} $ we
have
\begin{equation}\label{*}
A(a\xi+b\psi) = \begin{cases}
a^2+b^2 &\text{if } \rank L_{12} = 2, \\
a^2 &\text{if } \rank L_{12} = 1, \\
0 &\text{if } \rank L_{12} = 0
\end{cases}
\end{equation}
for all $ a,b \in \C $.

\begin{lemma}\label{2.2}
If $ \rank L_{12} = 2 $ then there exist bases $ \{x_1,y_1\}
$ of $ L_1 $ and $ \{x_2,y_2\} $ of $ L_2 $ such that\footnote{%
 Of course, ``$\spn$'' stands for the linear space spanned by given vectors.}
\[
L_{12} = \spn \{ x_1 \otimes x_2, \, y_1 \otimes y_2 \} \, .
\]
All product vectors in $ L_{12} $ are of the form $ \la x_1 \otimes x_2 $ or $
\la y_1 \otimes y_2 $ ($ \la \in\C $).
\end{lemma}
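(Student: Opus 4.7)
The plan is to leverage the canonical form \eqref{*} to extract the two lines of product vectors explicitly, and then argue by a dimension/rank count that these two lines must span bases on each factor.

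Starting from the basis $\{\xi,\psi\}$ of $L_{12}$ furnished by \eqref{*} in the rank-$2$ case, a non-zero vector $a\xi+b\psi\in L_{12}$ is a product vector if and only if $A(a\xi+b\psi)=a^2+b^2=0$. Since we work over an algebraically closed field of characteristic $\ne 2$, this factors as $(a+\I b)(a-\I b)=0$, and the product vectors in $L_{12}$ fall into exactly two lines $\C(\xi+\I\psi)$ and $\C(\xi-\I\psi)$. Pick representatives and write them as product vectors $\xi+\I\psi = x_1\otimes x_2$ and $\xi-\I\psi = y_1\otimes y_2$.

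The key step is to show that $\{x_1,y_1\}$ is a basis of $L_1$ and $\{x_2,y_2\}$ is a basis of $L_2$. Suppose instead that $y_1\in\C x_1$. Then both product vectors lie in $x_1\otimes L_2$, hence so does $L_{12}$ (since these two vectors span $L_{12}$). But $\dim(x_1\otimes L_2)=2=\dim L_{12}$, so $L_{12}=x_1\otimes L_2$ would consist entirely of product vectors, forcing $A$ to vanish identically on $L_{12}$. This contradicts $\rank L_{12}=2$. The same argument rules out $y_2\in\C x_2$. Hence both pairs are bases, and $L_{12}=\spn\{x_1\otimes x_2,\,y_1\otimes y_2\}$, as claimed.

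The final clause, describing all product vectors in $L_{12}$, is then immediate from the first paragraph: the two lines of product vectors are precisely $\C(x_1\otimes x_2)$ and $\C(y_1\otimes y_2)$. I expect no real obstacle — the only thing to watch is the use of $\I$, which requires characteristic $\ne 2$ and algebraic closure, exactly the hypotheses assumed throughout Section~\ref{sec:2}.
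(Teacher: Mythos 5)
Your proof is correct and follows essentially the same route as the paper: use \eqref{*} to identify the two lines of product vectors $\C(\xi\pm\I\psi)$, write them as $x_1\otimes x_2$ and $y_1\otimes y_2$, and then rule out collinearity of $x_1,y_1$ (and of $x_2,y_2$). The only cosmetic difference is in that last step: you argue that collinearity would force $L_{12}=x_1\otimes L_2$ to consist entirely of product vectors (rank $0$), while the paper observes directly that $2\xi=x_1\otimes(x_2+cy_2)$ would give $A(\xi)=0$; both are fine.
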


\begin{proof}
By \eqref{*}, $ A(\xi\pm\I\psi) = 0 $, therefore $ \xi+\I\psi = x_1 \otimes
x_2 $ and $ \xi-\I\psi = y_1 \otimes y_2 $ for some nonzero vectors $ x_1,y_1
\in L_1 $, $ x_2,y_2 \in L_2 $. Vectors $ x_1, y_1 $ are linearly independent,
since otherwise $ 2\xi = x_1 \otimes x_2 + y_1 \otimes y_2 = x_1 \otimes ( x_2
+ c y_2 ) $ would imply $ A(\xi) = 0 $. Similarly, $ x_2, y_2 $ are linearly
independent. It remains to note that all product vectors in $ L_{12} $ are of
the form $ \la (\xi\pm\I\psi) $ by \eqref{*}.
\end{proof}

\begin{lemma}\label{2.3}
If $ \rank L_{12} = 1 $ then there exist bases $ \{x_1,y_1\}
$ of $ L_1 $ and $ \{x_2,y_2\} $ of $ L_2 $ such that
\[
L_{12} = \spn \{ x_1 \otimes x_2, \, y_1 \otimes x_2 + x_1 \otimes y_2 \} \, .
\]
All product vectors in $ L_{12} $ are of the form $ \la x_1 \otimes x_2 $ ($
\la \in\C $).
\end{lemma}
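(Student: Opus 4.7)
The plan is to follow exactly the template of Lemma~\ref{2.2}, adapted to the rank-one case. From \eqref{*} one obtains a basis $\{\xi,\psi\}$ of $L_{12}$ with $A(a\xi+b\psi)=a^2$. In particular $A(\psi)=0$, so $\psi$ is a nonzero product vector: $\psi=x_1\otimes x_2$ for some $0\ne x_1\in L_1$, $0\ne x_2\in L_2$. Moreover the product vectors inside $L_{12}$ are precisely those on which $A$ vanishes, which by \eqref{*} are exactly the scalar multiples of $\psi=x_1\otimes x_2$; this proves the second assertion of the lemma.

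For the first assertion I extend $x_1$ to a basis $\{x_1,y_1\}$ of $L_1$ and $x_2$ to a basis $\{x_2,y_2\}$ of $L_2$, and expand
\[
\xi=\alpha\,x_1\otimes x_2+\beta\,x_1\otimes y_2+\gamma\,y_1\otimes x_2+\delta\,y_1\otimes y_2.
\]
Using the determinantal formula for $A$ displayed just before Lemma~\ref{2.2}, I compute
\[
A(a\xi+b\psi)=a^2(\alpha\delta-\beta\gamma)+ab\,\delta.
\]
Matching this against the required $a^2$ forces the cross term to vanish, so $\delta=0$, and $-\beta\gamma=1$, so $\beta,\gamma$ are both nonzero. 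Replacing $\xi$ by $\xi-\alpha\psi$—a change that preserves $L_{12}=\spn\{\xi,\psi\}$—I may further take $\alpha=0$, reducing $\xi$ to $\beta\,x_1\otimes y_2+\gamma\,y_1\otimes x_2$.

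Finally I rescale, replacing $y_1$ by $\gamma y_1$ and $y_2$ by $\beta y_2$ while keeping $x_1$ and $x_2$ (and therefore $\psi=x_1\otimes x_2$) untouched. In the new bases, $\xi$ becomes $y_1\otimes x_2+x_1\otimes y_2$, so $L_{12}=\spn\{x_1\otimes x_2,\ y_1\otimes x_2+x_1\otimes y_2\}$ as claimed.

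There is no serious obstacle here; everything reduces to a short coordinate computation once the right basis is in place. The only mildly delicate point is to notice that the final rescaling touches only $y_1$ and $y_2$, so it normalizes the second generator of $L_{12}$ without disturbing the already-chosen product vector $\psi=x_1\otimes x_2$.
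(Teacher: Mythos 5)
Your proof is correct and follows essentially the same route as the paper: use the normal form \eqref{*} to exhibit the product vector $\psi=x_1\otimes x_2$, expand $\xi$ in product bases, force $\delta=0$ and $\beta\gamma\ne 0$ from the quadratic form (the paper evaluates $A(\xi\pm\psi)$, you match coefficients of $A(a\xi+b\psi)$ — the same computation), then subtract $\alpha\psi$ and absorb $\beta,\gamma$ into $y_2,y_1$.
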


\begin{proof}
By \eqref{*}, $ A(\xi\pm\psi) = 1 $ and $ A(\psi)=0 $. Thus, $
\psi $ is a product vector, and we can choose bases $ \{x_1,y_1\} $ of $ L_1 $
and $ \{x_2,y_2\} $ of $ L_2 $ such that $ \psi = x_1 \otimes x_2 $. Taking $
\al, \be, \ga, \de \in \C $ such that $ \xi = \al x_1 \otimes x_2 + \be x_1
\otimes y_2 + \ga y_1 \otimes x_2 + \de y_1 \otimes y_2 $ we get
\begin{multline*}
1 = A(\xi\pm\psi) = A \( (\al\pm1) x_1 \otimes x_2 + \be x_1 \otimes y_2 + \ga
 y_1 \otimes x_2 + \de y_1 \otimes y_2 \) = \\
= \begin{vmatrix} \al\pm1 & \be \\ \ga & \de \end{vmatrix} = \begin{vmatrix}
 \al & \be \\ \ga & \de \end{vmatrix} \pm \de \, .
\end{multline*}
Thus, $ \de = 0 $, $ \be \ne 0 $, $ \ga \ne 0 $. We have $ L_{12} =
\spn(\psi,\xi) = \spn(\psi,\xi-\al\psi) = \spn ( x_1 \otimes x_2, \, \be x_1
\otimes y_2 + \ga y_1 \otimes x_2 ) = \spn \( x_1 \otimes x_2, \, x_1 \otimes (
\be y_2 ) + ( \ga y_1 ) \otimes x_2 \) $. It remains to note that all product
vectors in $ L_{12} $ are of the form $ \la \psi $ by \eqref{*}.
\end{proof}

\begin{lemma}\label{2.4}
If $ \rank L_{12} = 0 $ then there exist bases $ \{x_1,y_1\}
$ of $ L_1 $ and $ \{x_2,y_2\} $ of $ L_2 $ such that
\[
\text{either } L_{12} = \spn \{ x_1 \otimes x_2, \, y_1 \otimes x_2 \} \text{
  or } L_{12} = \spn \{ x_1 \otimes x_2, \, x_1 \otimes y_2 \} \, .
\]
That is, $ L_{12} $ is either $ L_1 \otimes x_2 $ or $ x_1 \otimes L_2 $. All
vectors in $ L_{12} $ are product vectors.
\end{lemma}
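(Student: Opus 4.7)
The plan is straightforward and exploits the defining property of the form $A$. The hypothesis $\rank L_{12} = 0$ says, by \eqref{*}, that $A$ vanishes identically on $L_{12}$; since $A$ detects exactly the product vectors in $L_1 \otimes L_2$, this means \emph{every} non-zero vector of $L_{12}$ is a product vector. The last assertion of the lemma (``all vectors in $L_{12}$ are product vectors'') will then be immediate from either of the two normal forms obtained.

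First I would pick any basis $\{\xi,\psi\}$ of $L_{12}$ and write $\xi = u_1 \otimes u_2$, $\psi = v_1 \otimes v_2$ with all four factors non-zero. The sum $\xi + \psi$ also lies in $L_{12}$, so $u_1 \otimes u_2 + v_1 \otimes v_2$ must again be a product vector. The key observation is the standard tensor-rank fact: if both $\{u_1,v_1\} \subset L_1$ and $\{u_2,v_2\} \subset L_2$ were linearly independent, then $u_1 \otimes u_2 + v_1 \otimes v_2$ would have tensor rank exactly $2$, hence would not be a product vector. This can be verified either by writing it in coordinates and computing the $2\times 2$ determinant, or by applying $A$ directly, which gives a non-zero value. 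This contradiction forces one of the two pairs to be linearly dependent.

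If $\{u_1,v_1\}$ is dependent, then after rescaling $\psi = u_1 \otimes (\la v_2)$ for some scalar, so both $\xi$ and $\psi$ (hence all of $L_{12}$) lie in $u_1 \otimes L_2$; comparing dimensions forces $L_{12} = u_1 \otimes L_2$. Setting $x_1 := u_1$ and choosing any vector $y_1 \in L_1$ with $\{x_1,y_1\}$ a basis, together with any basis $\{x_2,y_2\}$ of $L_2$, yields the second alternative. Symmetrically, if $\{u_2,v_2\}$ is dependent we obtain $L_{12} = L_1 \otimes x_2$ and hence the first alternative. No step presents a real obstacle; the only mild subtlety is the tensor-rank claim about $u_1 \otimes u_2 + v_1 \otimes v_2$, which is where the ``two product vectors force a common factor'' mechanism is used.
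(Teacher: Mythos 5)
Your proof is correct, and it takes a somewhat different route from the paper's. You argue intrinsically: since $A$ vanishes identically on $L_{12}$, every vector of $L_{12}$ is a product vector; you then take a basis $\xi = u_1 \otimes u_2$, $\psi = v_1 \otimes v_2$ and use the standard rank fact that $u_1 \otimes u_2 + v_1 \otimes v_2$ can only be a product vector if $\{u_1,v_1\}$ or $\{u_2,v_2\}$ is dependent (equivalently, $A(\xi+\psi) \ne 0$ otherwise), after which a dimension count gives $L_{12} = u_1 \otimes L_2$ or $L_1 \otimes u_2$. The paper instead fixes one product vector $x_1 \otimes x_2 \in L_{12}$, passes to dual coordinates, and from $A(\psi)=0$ and $A(\psi - \ip{x'_1\otimes x'_2}{\psi}\, x_1\otimes x_2)=0$ extracts the identities $\ip{x'_1\otimes y'_2}{\psi}\ip{y'_1\otimes x'_2}{\psi}=0$ and $\ip{x'_1\otimes x'_2}{\psi}\ip{y'_1\otimes y'_2}{\psi}=0$ on all of $L_{12}$, concluding via the fact that a product of two linear functionals vanishing identically forces one factor to vanish identically (a linear space is not the union of two proper subspaces). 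Both arguments rest on the same underlying mechanism; yours is shorter and more conceptual, trading the paper's coordinate bookkeeping for the tensor-rank lemma, while the paper's version avoids having to discuss ranks of sums of rank-one tensors and stays entirely within the formalism of the form $A$ already set up in Sect.~\ref{sec:3}. The only point worth making explicit in your write-up is that $\lambda \ne 0$ in the dependent case (immediate since $v_1 \ne 0$), so that $\xi$ and $\psi$ remain an independent pair inside the two-dimensional space $u_1 \otimes L_2$.
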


\begin{proof}
We take bases $ \{x_1,y_1\} $ of $ L_1 $ and $ \{x_2,y_2\} $ of $ L_2 $ such
that $ x_1 \otimes x_2 \in L_{12} $ and consider the dual (biorthogonal) bases
$ \{x'_1,y'_1\} $ of $ L'_1 $ and $ \{x'_2,y'_2\} $ of $ L'_2 $. By \eqref{*},
\[
\ip{ x'_1 \otimes x'_2 }{ \psi } \ip{ y'_1 \otimes y'_2 }{ \psi } -
\ip{ x'_1 \otimes y'_2 }{ \psi } \ip{ y'_1 \otimes x'_2 }{ \psi } = 0
\]
for all $ \psi \in L_{12} $. The same holds for the vector $ \psi - \ip{ x'_1
\otimes x'_2 }{ \psi } x_1 \otimes x_2 $, therefore
\[
\ip{ x'_1 \otimes y'_2 }{ \psi } \ip{ y'_1 \otimes x'_2 }{ \psi } = 0 \quad
\text{and} \quad 
\ip{ x'_1 \otimes x'_2 }{ \psi } \ip{ y'_1 \otimes y'_2 }{ \psi } = 0
\]
for all $ \psi \in L_{12} $. Taking into account that $ \ip{ x'_1 \otimes x'_2
}{ \psi } $ is not always $ 0 $ we conclude that $ \ip{ y'_1 \otimes y'_2 }{
\psi } = 0 $ for all $ \psi \in L_{12} $, and either $ \ip{ x'_1 \otimes y'_2
}{ \psi } = 0 $ for all $ \psi \in L_{12} $, or $ \ip{ y'_1 \otimes x'_2 }{
\psi } = 0 $  for all $ \psi \in L_{12} $. The former implies $ L_{12} = L_1
\otimes x_2 $; the latter implies $ L_{12} = x_1 \otimes L_2 $.
\end{proof}

\section[Three tensor factors]
 {\raggedright Three tensor factors}
\label{sec:4}
\begin{center}\textsc{different factors}\end{center}

Assume that two-dimensional linear spaces
\pagebreak[1]
$ L_1 $, $ L_2 $, $ L_3 $, $ L_{12}
$, $ L_{23} $ and $ L_{123} $ satisfy
\begin{equation}\label{305}
\begin{gathered}
L_{12} \subset L_1 \otimes L_2 \, , \quad L_{23} \subset L_2 \otimes L_3 \, ,
 \quad L_{123} \subset L_1 \otimes L_2 \otimes L_3 \, , \\
L_{123} \subset ( L_{12} \otimes L_3 ) \cap ( L_1 \otimes L_{23} ) \, .
\end{gathered}
\end{equation}

According to Sect.~\ref{sec:3}, the subspace $ L_{12} $ of $ L_1 \otimes L_2 $
may be of rank $ 0 $, $ 1 $, or $ 2 $ and similarly $ L_{23} \subset L_2
\otimes L_3 $ may be of rank $ 0 $, $ 1 $, or $ 2 $.

\begin{lemma}\label{3.1}
If $ \rank L_{12} = 2 $ and $ \rank L_{23} \ne 0 $ then $ \rank L_{23} = 2 $
and there exist bases $ \{x_1,y_1\} $ of $ L_1 $, $ \{x_2,y_2\} $ of $ L_2 $
and $ \{x_3,y_3\} $ of $ L_3 $ such that
\[
L_{123} = \spn \{ x_1 \otimes x_2 \otimes x_3, y_1 \otimes y_2 \otimes y_3 \}  \, .
\]
\end{lemma}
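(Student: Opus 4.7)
The plan is to normalize $L_{12}$ first using the rank-$2$ case of Section~\ref{sec:3}, and then to read off the structure of $L_{123}$ directly from the inclusion $L_{123}\subset(L_{12}\otimes L_3)\cap(L_1\otimes L_{23})$.

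First, I would invoke Lemma~\ref{2.2} to choose bases $\{x_1,y_1\}$ of $L_1$ and $\{x_2,y_2\}$ of $L_2$ with $L_{12}=\spn\{x_1\otimes x_2,\,y_1\otimes y_2\}$. Every element of $L_{12}\otimes L_3$ then has the form $x_1\otimes x_2\otimes v_1+y_1\otimes y_2\otimes v_2$ for some $v_1,v_2\in L_3$. Using $L_1\otimes L_{23}=x_1\otimes L_{23}\oplus y_1\otimes L_{23}$, the condition that this vector also lie in $L_1\otimes L_{23}$ translates, by independence of $x_1$ and $y_1$, into the two independent requirements $x_2\otimes v_1\in L_{23}$ and $y_2\otimes v_2\in L_{23}$. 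Define
\[
W_x=\{v\in L_3:x_2\otimes v\in L_{23}\},\qquad W_y=\{v\in L_3:y_2\otimes v\in L_{23}\}.
\]
Then $(L_{12}\otimes L_3)\cap(L_1\otimes L_{23})=x_1\otimes x_2\otimes W_x\oplus y_1\otimes y_2\otimes W_y$, whose dimension is $\dim W_x+\dim W_y$.

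Next I would rule out $\dim W_x=2$ and $\dim W_y=2$: either would force $L_{23}$ to contain, hence equal, $x_2\otimes L_3$ or $y_2\otimes L_3$, which by Lemma~\ref{2.4} gives $\rank L_{23}=0$, contrary to hypothesis. Therefore $\dim W_x,\dim W_y\le 1$, and since $L_{123}$ is two-dimensional and sits inside this intersection, I must have $\dim W_x=\dim W_y=1$. Pick generators $x_3\in W_x$ and $y_3\in W_y$; then $L_{123}=\spn\{x_1\otimes x_2\otimes x_3,\,y_1\otimes y_2\otimes y_3\}$ is forced, exhausting the two-dimensional intersection.

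The last items are independence and the rank statement. If $x_3$ and $y_3$ were proportional, then $L_{23}$ would contain both $x_2\otimes x_3$ and a scalar multiple of $y_2\otimes x_3$, so $L_{23}=L_2\otimes x_3$, again rank $0$ by Lemma~\ref{2.4} — contradiction. Hence $\{x_3,y_3\}$ is a basis of $L_3$. Finally, $L_{23}$ contains the two product vectors $x_2\otimes x_3$ and $y_2\otimes y_3$, which are linearly independent (indeed non-proportional, as $\{x_2,y_2\}$ and $\{x_3,y_3\}$ are bases), so by Lemmas~\ref{2.3} and~\ref{2.4} neither rank $1$ nor rank $0$ is compatible; thus $\rank L_{23}=2$. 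I do not anticipate a serious obstacle — the main point is simply that normalizing $L_{12}$ diagonalizes the intersection into two independent one-dimensional problems in $L_3$, and the assumption $\rank L_{23}\neq 0$ is exactly what prevents any degeneration.
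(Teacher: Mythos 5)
Your argument is correct, but it takes a genuinely different route from the paper. The paper's proof first invokes Lemma~\ref{1.1} (the main lemma, whose proof is the heavy resultant identity $D_8+D_4=0$) to produce a nonzero product vector $z_1\otimes z_2\otimes z_3\in L_{123}$, normalizes it to $x_1\otimes x_2\otimes x_3$ via Lemma~\ref{2.2}, and then analyzes a second spanning vector $\psi$ of $L_{123}$ coefficient by coefficient, using $\rank L_{23}\ne 0$ to kill the unwanted terms. You bypass Lemma~\ref{1.1} entirely: once $L_{12}$ is diagonalized by Lemma~\ref{2.2}, you observe that the whole intersection $(L_{12}\otimes L_3)\cap(L_1\otimes L_{23})$ splits as $x_1\otimes x_2\otimes W_x\oplus y_1\otimes y_2\otimes W_y$ with $W_x,W_y\subset L_3$, and the hypothesis $\rank L_{23}\ne 0$ caps each of $\dim W_x,\dim W_y$ at $1$, so the two-dimensionality of $L_{123}$ forces everything. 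This is more elementary and even yields slightly more (that $L_{123}$ is the full intersection, and that the product vectors exist automatically rather than via the determinant identity); what the paper's route buys is uniformity, since the same Lemma~\ref{1.1} template is reused verbatim for the rank-one case in Lemma~\ref{3.2}. Two small citation points: when you conclude $\rank L_{23}=0$ from $L_{23}\supset x_2\otimes L_3$ (or from $x_3\parallel y_3$), the relevant fact is the immediate converse of Lemma~\ref{2.4} --- a subspace of the form $x\otimes L_3$ or $L_2\otimes x$ consists of product vectors, so the form $A$ vanishes on it --- which is exactly the implicit step the paper also uses; and your final rank-two claim is cleanest via Lemma~\ref{2.3} alone (a rank-one space has all its product vectors proportional, while $L_{23}=\spn\{x_2\otimes x_3,\,y_2\otimes y_3\}$ visibly has two non-proportional ones), rank $0$ being already excluded by hypothesis.
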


\begin{proof}
The claim ``$ \rank L_{23} = 2 $'' follows from the formula for $ L_{123} $,
proved below. Lemma \ref{1.1} gives non-zero vectors $ z_1 \in L_1 $, $ z_2
\in L_2 $ and $ z_3 \in L_3 $ such that $ z_1 \otimes z_2 \otimes z_3 \in
L_{123} $. On the other hand, Lemma \ref{2.2} gives bases $ \{x_1,y_1\} $ of $
L_1 $ and $ \{x_2,y_2\} $ of $ L_2 $ such that $ L_{12} = \spn \{ x_1 \otimes
x_2, \, y_1 \otimes y_2 \} $ and states that $ z_1 \otimes z_2 $ is either $
\la x_1 \otimes x_2 $ or $ \la y_1 \otimes y_2 $. Without loss of generality,
$ z_1 \otimes z_2 = x_1 \otimes x_2 $. We choose a basis $ \{x_3,y_3\} $ of $
L_3 $ such that $ z_3 = x_3 $; thus $ x_1 \otimes x_2 \otimes x_3 \in L_{123}
$.

We have $ L_{123} = \spn \{ x_1 \otimes x_2 \otimes x_3, \, \psi \} $ for some $
\psi = \al x_1 \otimes x_2 \otimes y_3 + \be y_1 \otimes y_2 \otimes x_3 + \ga
y_1 \otimes y_2 \otimes y_3 $; combinations $ x_1 \otimes y_2 $ and $ y_1
\otimes x_2 $ cannot appear, since $ \psi \in L_{12} \otimes L_3 $. The space
$ L_{23} $ contains $ x_2 \otimes x_3 $, since $ x_1 \otimes x_2 \otimes x_3
\in L_{123} \subset L_1 \otimes L_{23} $. Also, $ \al x_2 \otimes y_3 \in
L_{23} $ and $ \be y_2 \otimes x_3 + \ga y_2 \otimes y_3 \in L_{23} $, since $
\psi \in L_1 \otimes L_{23} $. However, $ x_2 \otimes y_3 $ cannot belong to $
L_{23} $, since otherwise $ L_{23} \supset x_2 \otimes L_3 $ in contradiction
to ``$ \rank L_{23} \ne 0 $''. Similarly, $ y_2 \otimes x_3 \notin L_{23} $, since
otherwise $ L_{23} \supset L_2 \otimes x_3 $. Thus, $ \al = 0 $, $ \ga \ne 0
$ (it cannot happen that $ \be=\ga=0 $), and $ L_{123} = \spn \( x_1 \otimes
x_2 \otimes x_3, \, y_1 \otimes y_2 \otimes (\be x_3 + \ga y_3) \) $; it
remains to redefine $ y_3 $ as $ \be x_3 + \ga y_3 $.
\end{proof}

\begin{lemma}\label{3.2}
If $ \rank L_{12} = 1 $ and $ \rank L_{23} \ne 0 $ then $ \rank L_{23} = 1 $
and there exist bases $ \{x_1,y_1\} $ of $ L_1 $, $ \{x_2,y_2\} $ of $ L_2 $
and $ \{x_3,y_3\} $ of $ L_3 $ such that
\[
L_{123} = \spn \{ x_1 \otimes x_2 \otimes x_3, y_1 \otimes x_2 \otimes x_3 + x_1
\otimes y_2 \otimes x_3 + x_1 \otimes x_2 \otimes y_3 \} \, .
\]
\end{lemma}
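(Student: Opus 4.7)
The plan is to mirror the proof of Lemma \ref{3.1}, systematically replacing the rank-$2$ normal form (Lemma \ref{2.2}) by the rank-$1$ normal form (Lemma \ref{2.3}). First, however, I would rule out $\rank L_{23} = 2$: the hypotheses \eqref{305} are symmetric under the simultaneous relabeling $L_1 \leftrightarrow L_3$, $L_{12} \leftrightarrow L_{23}$, so Lemma \ref{3.1} applied to the relabeled setup asserts that $\rank L_{23} = 2$ together with $\rank L_{12} \ne 0$ forces $\rank L_{12} = 2$. Since we are given $\rank L_{12} = 1$, this is impossible, and combined with $\rank L_{23} \ne 0$ we conclude $\rank L_{23} = 1$.

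Next I would set up the bases. Lemma \ref{1.1} supplies nonzero $z_1, z_2, z_3$ with $z_1 \otimes z_2 \in L_{12}$ and $z_2 \otimes z_3 \in L_{23}$. Lemma \ref{2.3} applied to $L_{12}$ states that every product vector of $L_{12}$ is a scalar multiple of a single distinguished $x_1 \otimes x_2$, so after rescaling we may take $z_1 = x_1$, $z_2 = x_2$, and obtain bases $\{x_1,y_1\}$ of $L_1$ and $\{x_2,y_2\}$ of $L_2$ with $L_{12} = \spn\{x_1 \otimes x_2, \, y_1 \otimes x_2 + x_1 \otimes y_2\}$. Applying Lemma \ref{2.3} to $L_{23}$ likewise identifies a distinguished product direction there, necessarily proportional to $x_2 \otimes z_3$; I set $x_3 := z_3$. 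The delicate point is that a priori Lemma \ref{2.3} provides its own complementary vector in $L_2$, possibly different from the $y_2$ already fixed. I would resolve this by observing that the ``other'' generator of $L_{23}$ modulo $x_2 \otimes x_3$ is unique up to scale, and writing it explicitly in the basis $\{x_2, y_2\}$ of $L_2$ shows that an appropriate rescaling of $y_3$ (absorbing the change of $L_2$-basis) produces the normal form $L_{23} = \spn\{x_2 \otimes x_3, \, y_2 \otimes x_3 + x_2 \otimes y_3\}$ with the same $y_2$.

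Finally I would read off $L_{123}$ by parametrization. A general element of $L_{12} \otimes L_3$ has the form $x_1 \otimes x_2 \otimes u + (y_1 \otimes x_2 + x_1 \otimes y_2) \otimes v$ with $u,v \in L_3$, and after rearrangement its membership in $L_1 \otimes L_{23}$ becomes the pair of conditions $x_2 \otimes v \in L_{23}$ and $x_2 \otimes u + y_2 \otimes v \in L_{23}$. The first (a product vector in a rank-$1$ subspace) forces $v \in \C x_3$, say $v = \gamma x_3$; writing $u = \mu x_3 + \nu y_3$ and matching coefficients in the second forces $\nu = \gamma$ while leaving $\mu$ free. Substituting back exhibits the intersection $(L_{12} \otimes L_3) \cap (L_1 \otimes L_{23})$, and hence $L_{123}$, as the span of $x_1 \otimes x_2 \otimes x_3$ and $y_1 \otimes x_2 \otimes x_3 + x_1 \otimes y_2 \otimes x_3 + x_1 \otimes x_2 \otimes y_3$, as required. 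The only real obstacle is the reconciliation step in the second paragraph; everything else parallels the proof of Lemma \ref{3.1}.
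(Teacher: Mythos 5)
Your proof is correct, but it is organized quite differently from the paper's. The paper never determines $\rank L_{23}$ in advance: it writes $L_{123}=\spn\{x_1\otimes x_2\otimes x_3,\,\psi\}$, uses $\psi\in L_{12}\otimes L_3$ to restrict the shape of $\psi$, reads off which vectors of $L_2\otimes L_3$ are thereby forced into $L_{23}$, and kills the bad coefficients by showing they would make $L_{23}$ contain $x_2\otimes L_3$ or $L_2\otimes x_3$ (hence have rank $0$); the claim $\rank L_{23}=1$ and the normal form of $L_{23}$ fall out at the end. You instead settle $\rank L_{23}=1$ first by applying Lemma \ref{3.1} to the reversed configuration --- a legitimate move, provided you observe that \eqref{305} is preserved under reversal of the tensor order (mere relabeling $L_1\leftrightarrow L_3$ is not literally enough, since $L_{23}\subset L_2\otimes L_3$ rather than $L_3\otimes L_2$) and that the rank of Sect.~\ref{sec:3} is invariant under the flip $L_2\otimes L_3\to L_3\otimes L_2$ (the defining determinant is transpose-invariant). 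This lets you put both $L_{12}$ and $L_{23}$ into the Lemma \ref{2.3} normal form over a common basis of $L_2$; your reconciliation step is sound, because the second generator of $L_{23}$ modulo $x_2\otimes x_3$ is determined up to scale and the discrepancy can be absorbed into a redefinition of $y_3$ alone. The final coefficient-matching computation of $(L_{12}\otimes L_3)\cap(L_1\otimes L_{23})$ is correct and in fact yields slightly more than the lemma asserts, namely that this intersection is itself two-dimensional and therefore equals $L_{123}$; the price is the extra symmetry argument and the basis reconciliation, both of which the paper's single-vector analysis of $\psi$ avoids.
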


\begin{proof}
The claim ``$ \rank L_{23} = 1 $'' follows from the formula for $ L_{123} $,
proved below. Similarly to the proof of \ref{3.1}, using \ref{1.1} and
\ref{2.3} we get bases $ \{x_1,y_1\} $, $ \{x_2,y_2\} $ and $ \{x_3,y_3\} $
such that $ L_{12} = \spn \{ x_1 \otimes x_2, \, y_1 \otimes x_2 + x_1 \otimes
y_2 \} $ and  $ x_1 \otimes x_2 \otimes x_3 \in L_{123} $.

We have $ L_{123} = \spn \{ x_1 \otimes x_2 \otimes x_3, \, \psi \} $ for some
$ \psi = \al x_1 \otimes x_2 \otimes y_3 + ( y_1 \otimes x_2 + x_1 \otimes y_2
) \otimes ( \be x_3 + \ga y_3 ) = x_1 \otimes \( \al x_2 \otimes y_3 + y_2
\otimes ( \be x_3 + \ga y_3 ) \) + y_1 \otimes x_2 \otimes ( \be x_3 + \ga y_3
) $. Thus, $ x_2 \otimes x_3 \in L_{23} $, $ \al x_2 \otimes y_3 + y_2 \otimes
( \be x_3 + \ga y_3 ) \in L_{23} $ and $ x_2 \otimes ( \be x_3 + \ga y_3 ) \in
L_{23} $. If $ \ga \ne 0 $ then $ L_{23} \supset \spn \{ x_2 \otimes x_3, \,
x_2 \otimes ( \be x_3 + \ga y_3 ) \} = \spn \{ x_2 \otimes x_3, \, x_2 \otimes
y_3 \} = x_2 \otimes L_3 $ in contradiction to ``$ \rank L_{23} \ne 0
$''. Thus, $ \ga = 0 $, and $ L_{23} \supset \spn \{ x_2 \otimes x_3, \, \al
x_2 \otimes y_3 + \be y_3 \otimes x_3 \} $. However, $ x_2 \otimes y_3 \notin
L_{23} $ and $ y_2 \otimes x_3 \notin L_{23} $, thus $ \al \ne 0 $, $ \be \ne
0 $, and $ L_{123} = \spn \{ x_1 \otimes x_2 \otimes x_3, (\be y_1) \otimes
x_2 \otimes x_3 + x_1 \otimes (\be y_2) \otimes x_3 + x_1 \otimes x_2 \otimes
(\al y_3) \} $.
\end{proof}

It is more difficult to classify the cases where at least one rank
vanishes. Here are some possibilities:
\begin{align*}
& \spn \{ x_1 \otimes x_2 \otimes x_3, \, y_1 \otimes y_2 \otimes x_3 \} \, ; \\
& \spn \{ x_1 \otimes x_2 \otimes x_3, \, x_1 \otimes y_2 \otimes x_3 + y_1
 \otimes x_2 \otimes x_3 \} \, ; \\
& \spn \{ x_1 \otimes x_2 \otimes x_3, \, x_1 \otimes y_2 \otimes x_3 \} \, .
\end{align*}

\begin{center}\textsc{identical factors}\end{center}

Assume that two-dimensional linear spaces $ E_1, E_2, E_3 $ satisfy
\begin{equation}\label{3.25}
\begin{gathered}
E_2 \subset E_1 \otimes E_1 \, , \quad E_3 \subset E_1 \otimes E_1 \otimes E_1
 \, , \\
E_3 \subset ( E_2 \otimes E_1 ) \cap ( E_1 \otimes E_2 ) \, .
\end{gathered}
\end{equation}

\begin{lemma}\label{3.3}
If $ \rank E_2 = 2 $ then there exists a basis $ \{x,y\} $ of $ E_1 $ such
that either
\[
E_3 = \spn \{ x \otimes x \otimes x, y \otimes y \otimes y \}
\]
or
\[
E_3 = \spn \{ x \otimes y \otimes x, y \otimes x \otimes y \} \, .
\]
\end{lemma}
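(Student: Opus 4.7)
The plan is to apply Lemma~\ref{3.1} with $ L_1 = L_2 = L_3 = E_1 $, $ L_{12} = L_{23} = E_2 $, $ L_{123} = E_3 $. Since $ \rank E_2 = 2 $, the hypotheses are satisfied, yielding three \emph{a priori} distinct bases $ \{x_1,y_1\} $, $ \{x_2,y_2\} $, $ \{x_3,y_3\} $ of $ E_1 $ such that $ E_2 = \spn \{x_1 \otimes x_2,\, y_1 \otimes y_2\} $ (when $ E_2 $ is viewed as $ L_{12} $), and $ E_3 = \spn \{x_1 \otimes x_2 \otimes x_3,\, y_1 \otimes y_2 \otimes y_3\} $.

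The extra input comes from the fact that the same space $ E_2 $ also plays the role of $ L_{23} $. The inclusion $ E_3 \subset E_1 \otimes E_2 $ forces both $ x_2 \otimes x_3 $ and $ y_2 \otimes y_3 $ to be product vectors lying in $ E_2 $. But Lemma~\ref{2.2}, applied to $ E_2 $ as $ L_{12} $, says that the product vectors in $ E_2 $ lie on exactly two lines: $ \C \cdot x_1 \otimes x_2 $ and $ \C \cdot y_1 \otimes y_2 $. Since $ \{x_2,y_2\} $ is a basis of $ E_1 $, the vectors $ x_2 \otimes x_3 $ and $ y_2 \otimes y_3 $ are linearly independent, so they must lie on two different such lines, leaving exactly two cases to analyze.

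In the ``parallel'' case, $ x_2 \otimes x_3 \in \C \cdot x_1 \otimes x_2 $ and $ y_2 \otimes y_3 \in \C \cdot y_1 \otimes y_2 $. Uniqueness of the rank-one tensor decomposition in $ E_1 \otimes E_1 $ forces $ x_1 $, $ x_2 $, $ x_3 $ to be pairwise proportional, and likewise $ y_1 $, $ y_2 $, $ y_3 $. Setting $ x = x_1 $, $ y = y_1 $, the two generators of $ E_3 $ become scalar multiples of $ x \otimes x \otimes x $ and $ y \otimes y \otimes y $, yielding the first normal form. In the ``swapped'' case, $ x_2 \otimes x_3 \in \C \cdot y_1 \otimes y_2 $ and $ y_2 \otimes y_3 \in \C \cdot x_1 \otimes x_2 $, which forces $ x_2 \parallel y_1 $, $ x_3 \parallel y_2 \parallel x_1 $, and $ y_3 \parallel x_2 \parallel y_1 $; the generators then become scalar multiples of $ x \otimes y \otimes x $ and $ y \otimes x \otimes y $, yielding the second normal form.

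I expect no real obstacle: the substantive work has already been done in Lemmas~\ref{1.1}--\ref{3.1}. The remaining step is to observe that matching the two distinguished product lines of $ E_2 $ against themselves under the identification $ L_{12} = L_{23} $ admits only these two configurations, the rest being scalar bookkeeping which is harmless because rescaling the factors of a rank-one generator of $ E_3 $ only changes that generator by a scalar, hence preserves the span.
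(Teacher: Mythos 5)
Your proof is correct and follows essentially the same route as the paper: apply Lemma~\ref{3.1}, use the inclusions $E_3 \subset E_2 \otimes E_1$ and $E_3 \subset E_1 \otimes E_2$ to force $x_2 \otimes x_3$ and $y_2 \otimes y_3$ onto the two product lines $\C\, x_1 \otimes x_2$ and $\C\, y_1 \otimes y_2$ of $E_2$, and split into the parallel and swapped cases. The only cosmetic caveat is that the normalization $E_2 = \spn\{x_1 \otimes x_2,\, y_1 \otimes y_2\}$ is not literally part of the statement of Lemma~\ref{3.1}; as in the paper, it should be extracted from $E_3 \subset E_2 \otimes E_1$ (or from the proof of that lemma), which you effectively do.
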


\begin{proof}
Lemma \ref{3.1} applied to $ L_1 = L_2 = L_3 = E_1 $, $ L_{12} = L_{23} = E_2
$ and $ L_{123} = E_3 $ gives three bases $ \{x_1,y_1\} $, $ \{x_2,y_2\} $ and
$ \{x_3,y_3\} $ of $ E_1 $ such that $ E_3 = \spn \{ x_1 \otimes x_2 \otimes
x_3, y_1 \otimes y_2 \otimes y_3 \} $. In combination with the relation $ E_3
\subset E_2 \otimes E_1 $ it implies $ x_1 \otimes x_2 \in E_2 $ and $ y_1
\otimes y_2 \in E_2 $, thus $ E_2 = \spn \{ x_1 \otimes x_2, y_1 \otimes y_2
\} $. Similarly, $ E_2 = \spn \{ x_2 \otimes x_3, y_2 \otimes y_3 \} $. Each
product vector in $ E_2 $ is either $ \la x_1 \otimes x_2 $ or $ \la y_1
\otimes y_2 $.

Case 1: $ x_2 \otimes x_3 = \la x_1 \otimes x_2 $ and $ y_2 \otimes y_3 = \mu
y_1 \otimes y_2 $. Then $ x_1, x_2, x_3 $ are collinear; $ x_1 = \la_1 x $, $
x_2 = \la_2 x $, $ x_3 = \la_3 x $ for some $ x $. Similarly, $ y_1 = \mu_1 y
$, $ y_2 = \mu_2 y $, $ y_3 = \mu_3 y $. Thus, $ E_3 = \spn \{ x \otimes x
\otimes x, y \otimes y \otimes y \} $.

Case 2: $ x_2 \otimes x_3 = \la y_1 \otimes y_2 $ and $ y_2 \otimes y_3 = \mu
x_1 \otimes x_2 $. Then $ x_1, y_2, x_3 $ are collinear (to some $ x $), $
y_1, x_2, y_3 $ are collinear (to some $ y $), and we get $ E_3 = \spn \{ x
\otimes y \otimes x, y \otimes x \otimes y \} $.
\end{proof}

\begin{lemma}\label{3.4}
If $ \rank E_2 = 1 $ then there exists a basis $ \{x,y\} $ of $ E_1 $ and a
number $ \la \in \C $, $ \la \ne 0 $, such
that
\[
E_3 = \spn \{ x \otimes x \otimes x, \, y \otimes x \otimes x + \la x \otimes
y \otimes x + \la^2 x \otimes x \otimes y \} \, .
\]
\end{lemma}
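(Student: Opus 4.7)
The plan is to specialize Lemma~\ref{3.2} to the identical-factor situation and then exploit the constraint that the \emph{same} two-dimensional subspace $ E_2 $ plays the roles of both $ L_{12} $ and $ L_{23} $.

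First I apply Lemma~\ref{3.2} with $ L_1 = L_2 = L_3 = E_1 $, $ L_{12} = L_{23} = E_2 $ and $ L_{123} = E_3 $. This yields three bases $ \{x_i,y_i\} $ of $ E_1 $ ($ i = 1,2,3 $) such that
\[
E_2 = \spn \{ x_1 \otimes x_2, \, y_1 \otimes x_2 + x_1 \otimes y_2 \}
\]
and
\[
E_3 = \spn \{ x_1 \otimes x_2 \otimes x_3, \, y_1 \otimes x_2 \otimes x_3 + x_1 \otimes y_2 \otimes x_3 + x_1 \otimes x_2 \otimes y_3 \} \, .
\]
Decomposing the second generator of $ E_3 $ along the leftmost factor (using $ E_3 \subset L_1 \otimes L_{23} $) simultaneously gives
\[
E_2 = \spn \{ x_2 \otimes x_3, \, y_2 \otimes x_3 + x_2 \otimes y_3 \} \, .
\]

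Next I invoke Lemma~\ref{2.3}: in a rank-$1$ subspace the line of product vectors is unique. Comparing the two descriptions of $ E_2 $, the product vectors $ x_1 \otimes x_2 $ and $ x_2 \otimes x_3 $ must be proportional in $ E_1 \otimes E_1 $, which forces $ x_1, x_2, x_3 $ to be mutually collinear. I set $ x := x_1 $, write $ x_2 = \mu x $, $ x_3 = \nu x $ with $ \mu,\nu \ne 0 $, take $ y := y_1 $, and write $ y_2 = p x + q y $, $ y_3 = r x + s y $ with $ q, s \ne 0 $ (so that $ \{x_i,y_i\} $ is a basis).

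Now I expand both descriptions of $ E_2 $ in the induced basis of $ E_1 \otimes E_1 $; modulo $ x \otimes x $ they are spanned respectively by $ \mu\, y \otimes x + q\, x \otimes y $ and $ \nu q\, y \otimes x + \mu s\, x \otimes y $. Proportionality of these two vectors yields the key consistency relation
\[
\mu^{2} s = \nu q^{2}, \qquad \text{i.e.,} \qquad s/\nu = (q/\mu)^{2} \, .
\]
Substituting $ x_i, y_i $ into the second generator of $ E_3 $, collecting terms, dividing by $ \mu\nu $ and absorbing $ x \otimes x \otimes x $ via the first generator, I obtain
\[
y \otimes x \otimes x + (q/\mu)\, x \otimes y \otimes x + (s/\nu)\, x \otimes x \otimes y \, ,
\]
which, by the consistency relation, is exactly the target vector with $ \la = q/\mu \ne 0 $. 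The computation itself is purely bookkeeping; the only conceptual point — and the real content of the lemma — is that demanding the same subspace to serve as both $ L_{12} $ and $ L_{23} $ forces the geometric-progression pattern $ 1, \la, \la^{2} $.
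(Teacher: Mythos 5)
Your proof is correct and follows essentially the same route as the paper's: apply Lemma~\ref{3.2}, use the uniqueness of the product-vector line in the rank-$1$ subspace $E_2$ (Lemma~\ref{2.3}) to force $x_1,x_2,x_3$ to be collinear, and then compare the two descriptions of $E_2$ to extract the geometric progression $1,\la,\la^2$. The only cosmetic difference is that the paper first normalizes $y_k \mapsto y_k + c_k x$ so that $y_1,y_2,y_3$ become collinear before reading off $a_2/a_1=a_3/a_2=\la$, whereas you keep general coordinates and derive the equivalent consistency relation $\mu^2 s=\nu q^2$.
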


\begin{proof}
Similarly to the proof of Lemma \ref{3.3}, first, Lemma \ref{3.2} gives three
bases $ \{x_1,y_1\} $, $ \{x_2,y_2\} $ and $ \{x_3,y_3\} $ of $ E_1 $ such
that $ E_3 = \spn \{ x_1 \otimes x_2 \otimes x_3, \, y_1 \otimes x_2 \otimes
x_3 + x_1 \otimes y_2 \otimes x_3 + x_1 \otimes x_2 \otimes y_3 \} $; second,
$ E_2 $ contains the vectors $ x_1 \otimes x_2 $, $ y_1 \otimes x_2 + x_1
\otimes y_2 $, $ x_2 \otimes x_3 $, $ y_2 \otimes x_3 + x_2 \otimes y_3 $; and
third, $ x_1 \otimes x_2 $ and $ x_2 \otimes x_3 $ are collinear, thus, $ x_1,
x_2, x_3 $ are collinear to some $ x $.

Replacing $ y_k $ with $ y_k + c_k x $ for appropriate $ c_k $ we ensure that
$ y_1, y_2, y_3 $ are collinear to some $ y $. Now $ E_3 = \spn \{ x \otimes x
\otimes x, \, a_1 y \otimes x \otimes x + a_2 x \otimes y \otimes x + a_3 x
\otimes x \otimes y \} $ and $ E_2 = \spn \{ x \otimes x, \, a_1 y \otimes x +
a_2 x \otimes y, a_2 y \otimes x + a_3 x \otimes y \} $. The last two vectors
must be collinear, thus $ \frac{ a_2 }{ a_1 } = \frac{ a_3 }{ a_2 } = \la $
for some $ \la $.
\end{proof}

\begin{lemma}\label{3.5}
If $ \rank E_2 = 0 $ then there exists a basis $ \{x,y\} $ of $ E_1 $ such
that either
\[
E_3 = \spn \{ x \otimes x \otimes x, y \otimes x \otimes x \}
\]
or
\[
E_3 = \spn \{ x \otimes x \otimes x, x \otimes x \otimes y \} \, .
\]
\end{lemma}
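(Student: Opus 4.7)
The plan is to apply Lemma~\ref{2.4} to $E_2 \subset E_1 \otimes E_1$. Since $\rank E_2 = 0$, that lemma supplies a basis $\{x,y\}$ of $E_1$ such that either $E_2 = E_1 \otimes x$ (case A) or $E_2 = x \otimes E_1$ (case B). In each case I will compute $(E_2 \otimes E_1) \cap (E_1 \otimes E_2)$ explicitly and observe that this intersection is already two-dimensional. Since \eqref{3.25} places $E_3$ inside the intersection and $\dim E_3 = 2$, equality must then hold, pinpointing $E_3$.

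In case A, $E_2 \otimes E_1 = E_1 \otimes x \otimes E_1$ consists of those tensors in $E_1^{\otimes 3}$ whose second tensor factor is a multiple of $x$, while $E_1 \otimes E_2 = E_1 \otimes E_1 \otimes x$ consists of those whose third factor is a multiple of $x$. Expanding an arbitrary element in the basis $\{x,y\}^{\otimes 3}$, these two conditions together kill every coefficient involving a $y$ in the second or third slot and leave only the coefficients of $x \otimes x \otimes x$ and $y \otimes x \otimes x$. Hence the intersection equals $E_1 \otimes x \otimes x = \spn\{x \otimes x \otimes x,\, y \otimes x \otimes x\}$, which must then coincide with $E_3$, giving the first alternative.

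Case B is symmetric: the same coordinate computation yields $(E_2 \otimes E_1) \cap (E_1 \otimes E_2) = x \otimes x \otimes E_1 = \spn\{x \otimes x \otimes x,\, x \otimes x \otimes y\}$, which must again equal $E_3$. The only point that needs verification is the two-dimensionality of the intersection, a direct check that collapses immediately once the basis adapted to Lemma~\ref{2.4} is used; thus, in contrast with Lemmas~\ref{3.1}--\ref{3.4}, neither Lemma~\ref{1.1} nor a separate product-vector argument is needed, and there is no real obstacle to overcome.
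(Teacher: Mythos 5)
Your proposal is correct and is essentially the paper's own argument: apply Lemma~\ref{2.4} to write $E_2$ as $E_1\otimes x$ or $x\otimes E_1$, compute $(E_2\otimes E_1)\cap(E_1\otimes E_2)$ as $E_1\otimes x\otimes x$ (resp.\ $x\otimes x\otimes E_1$), and conclude equality with $E_3$ by comparing dimensions. The coordinate verification you spell out is the same ``direct check'' the paper leaves implicit.
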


That is, $ E_3 $ is either $ E_1 \otimes x \otimes x $ or $ x \otimes x
\otimes E_1 $.

\begin{proof}
By Lemma \ref{2.4}, $ E_2 $ is either $ E_1 \otimes x $ or $ x \otimes E_1 $
for some $ x $. Assume $ E_2 = E_1 \otimes x $ (the other case is similar). We
have $ E_3 \subset ( E_2 \otimes E_1 ) \cap ( E_1 \otimes E_2 ) = ( E_1
\otimes x \otimes E_1 ) \cap ( E_1 \otimes E_1 \otimes x ) = E_1 \otimes x
\otimes x $, therefore $ E_3 = E_1 \otimes x \otimes x $.
\end{proof}

\begin{sloppypar}
Given a triple $ (E_1,E_2,E_3) $ satisfying \eqref{3.25} and another such triple
$ (E'_1,E'_2,E'_3) $, we define an isomorphism between the two triples as an
invertible linear map $ \theta : E_1 \to E'_1 $ such that $ ( \theta \otimes
\theta ) (E_2) = E'_2 $ and $ ( \theta \otimes \theta \otimes \theta ) (E_3) =
E'_3 $.
\end{sloppypar}

Lemmas \ref{3.3}, \ref{3.4}, \ref{3.5} show that the following examples
exhaust all triples up to isomorphism.

\begin{example}\label{3.6}
$ E_1 = \C^2 $, $ E_2 = \spn \{ e_1 \otimes e_1, \, e_2 \otimes e_2 \} $, $
E_3 = \spn \{ e_1 \otimes e_1 \otimes e_1, \, e_2 \otimes e_2 \otimes e_2 \}
$; here $ e_1 = (1,0) $ and $ e_2 = (0,1) $ are the standard basis vectors in
$ \C^2 $.
\end{example}

\begin{example}\label{3.7}
$ E_1 = \C^2 $, $ E_2 = \spn \{ e_1 \otimes e_2, \, e_2 \otimes e_1 \} $, $
E_3 = \spn \{ e_1 \otimes e_2 \otimes e_1, \, e_2 \otimes e_1 \otimes e_2 \}
$.
\end{example}

\begin{example}\label{3.8}
Given $ \la \in \C $, $ \la \ne 0 $, we let $ E_1 = \C^2 $, $ E_2 = \spn \{
e_1 \otimes e_1, \, e_2 \otimes e_1 + \la e_1 \otimes e_2 \} $, $ E_3 = \spn
\{ e_1 \otimes e_1 \otimes e_1, \, e_2 \otimes e_1 \otimes e_1 + \la e_1
\otimes e_2 \otimes e_1 + \la^2 e_1 \otimes e_1 \otimes e_2 \} $.
\end{example}

\begin{example}\label{3.9}
$ E_1 = \C^2 $, $ E_2 = \spn \{ e_1 \otimes e_1, \, e_2 \otimes e_1 \} $, $
E_3 = \spn \{ e_1 \otimes e_1 \otimes e_1, \, e_2 \otimes e_1 \otimes e_1 \}
$.
\end{example}

\begin{example}\label{3.10}
$ E_1 = \C^2 $, $ E_2 = \spn \{ e_1 \otimes e_1, \, e_1 \otimes e_2 \} $, $
E_3 = \spn \{ e_1 \otimes e_1 \otimes e_1, \, e_1 \otimes e_1 \otimes e_2 \}
$.
\end{example}

\begin{lemma}\label{3.13}
The examples given above are mutually non-isomorphic.
\end{lemma}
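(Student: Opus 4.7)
My approach is to extract isomorphism invariants of the triple $(E_1,E_2,E_3)$ and check that the examples (including Example~\ref{3.8} for each value of $\la$) fall into distinct classes, using a coarse-to-fine sieve.

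First I would observe that $\rank E_2$ in the sense of Section~\ref{sec:3} is an isomorphism invariant: the quadratic form $A$ on $E_1 \otimes E_1$ characterized by vanishing on product vectors is unique up to a scalar, so $\theta \otimes \theta$ pulls the analogous form on $E'_1 \otimes E'_1$ back to a non-zero multiple of $A$, preserving the rank of its restriction to $E_2$. Direct computation gives $\rank E_2 = 2$ in Examples~\ref{3.6} and \ref{3.7}, $\rank E_2 = 1$ in Example~\ref{3.8}, and $\rank E_2 = 0$ in Examples~\ref{3.9} and \ref{3.10}, splitting the list into three groups.

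To separate \ref{3.6} from \ref{3.7} within the rank-$2$ group I would use the $\theta$-equivariant subset $S = \{ x \in E_1 : x \otimes x \in E_2 \}$: a short coefficient check shows $S = \C e_1 \cup \C e_2$ in \ref{3.6}, whereas $S = \{0\}$ in \ref{3.7}. To separate \ref{3.9} from \ref{3.10} within the rank-$0$ group I would invoke Lemma~\ref{2.4}, which exhibits the dichotomy ``$E_2 = E_1 \otimes v$'' versus ``$E_2 = v \otimes E_1$''; this type is preserved under $\theta \otimes \theta$ (since $(\theta \otimes \theta)(E_1 \otimes v) = E'_1 \otimes \theta(v)$), and the two options are mutually exclusive for two-dimensional $E_2$ by the uniqueness of factors in a product vector.

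Finally, within the family Example~\ref{3.8}, I would show that $\la$ itself is an invariant. The set $S$ of the preceding step, applied here, is the canonical line $\C e_1$, so any isomorphism $\theta$ must satisfy $\theta(e_1) \in \C e'_1$; writing $\theta(e_2) = a e'_1 + b e'_2$ with $b \ne 0$ and pushing $e_2 \otimes e_1 + \la e_1 \otimes e_2$ forward, then reducing modulo $\C(e'_1 \otimes e'_1)$, I obtain a non-zero scalar multiple of $e'_2 \otimes e'_1 + \la e'_1 \otimes e'_2$. Matching this against the analogous generator of $E'_2$ forces $\la = \la'$. I expect this last step to be the main obstacle, since it requires first pinning down the canonical line and then carefully tracking coefficients modulo $e'_1 \otimes e'_1$; the earlier separations follow immediately from the structural Lemmas~\ref{2.2}--\ref{2.4}.
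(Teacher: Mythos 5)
Your proposal is correct and follows the same coarse-to-fine sieve as the paper: split by $\rank E_2$, separate the two rank-$2$ and the two rank-$0$ examples by a finer invariant, and show $\la$ is an invariant within Example~\ref{3.8} by tracking an isomorphism explicitly. The rank-$0$ separation (the dichotomy $E_1\otimes v$ versus $v\otimes E_1$ from Lemma~\ref{2.4}) and the final coefficient computation forcing $\la=\mu$ coincide with the paper's. The one genuine divergence is the separation of Examples~\ref{3.6} and~\ref{3.7}: the paper observes that the exchange operator $x\otimes y\mapsto y\otimes x$ fixes $E_2$ pointwise in~\ref{3.6} but not in~\ref{3.7}, whereas you use the equivariant set $S=\{x\in E_1: x\otimes x\in E_2\}$, which is $\C e_1\cup\C e_2$ in~\ref{3.6} and $\{0\}$ in~\ref{3.7}. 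Both work; your invariant $S$ has the small advantage that it is reused verbatim in the Example~\ref{3.8} step to pin down the canonical line $\C e_1$ (where the paper instead appeals to the product-vector description from Lemma~\ref{2.3}), so your argument is slightly more uniform, while the paper's exchange-operator criterion is arguably the quicker one-line check for that single case.
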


\begin{proof}
First,
\[
\rank E_2 = \begin{cases}
 2 &\text{for \ref{3.6} and \ref{3.7}},\\
 1 &\text{for \ref{3.8}},\\
 0 &\text{for \ref{3.9} and \ref{3.10}}.
\end{cases}
\]
Examples \ref{3.6} and \ref{3.7} are non-isomorphic, since the exchange
operator ($ x \otimes y \mapsto y \otimes x $) on $ E_1 \otimes E_1 $ acts on
$ E_2 $ trivially for \ref{3.6} but not \ref{3.7}.

Examples \ref{3.9} and \ref{3.10} are non-isomorphic, since $ E_2 $ is of the
form $ E_1 \otimes x $ for \ref{3.9} but not \ref{3.10}.

It remains to prove that Example \ref{3.8} gives non-isomorphic triples for
different $ \la $. Assume the contrary: $ \theta : \C^2 \to \C^2 $ satisfies
\[
( \theta \otimes \theta ) \spn \{ e_1 \otimes e_1, \, e_2 \otimes e_1 + \la
e_1 \otimes e_2 \} = \spn \{ e_1 \otimes e_1, \, e_2 \otimes e_1 + \mu e_1
\otimes e_2 \}
\]
(and one more condition that we do not need now), that is,
\[
\spn \{ x \otimes x, \, y \otimes x + \la x \otimes y \} = \spn \{ e_1 \otimes
e_1, \, e_2 \otimes e_1 + \mu e_1 \otimes e_2 \}
\]
where $ x = \theta e_1 $ and $ y = \theta e_2 $. All product vectors of the
first space are collinear to $ x \otimes x $, of the second --- to $ e_1
\otimes e_1 $; thus, $ x \otimes x $ is collinear to $ e_1 \otimes e_1 $, and
$ x $ is collinear to $ e_1 $. Multiplying $ y $ by an appropriate scalar we
get $ y \otimes e_1 + \la e_1 \otimes y = e_2 \otimes e_1 + \mu e_1 \otimes
e_2 + c e_1 \otimes e_1 $ for some $ c $. Taking $ a,b $ such that $ y = a e_1
+ b e_2 $ we get $ a e_1 \otimes e_1 + b e_2 \otimes e_1 + \la a e_1 \otimes
e_1 + \la b e_1 \otimes e_2 = e_2 \otimes e_1 + \mu e_1 \otimes e_2 + c e_1
\otimes e_1 $, which implies $ b = 1 $ and $ \la b = \mu $.
\end{proof}

\begin{proposition}\label{3.14}
For every triple $ (E_1,E_2,E_3) $ satisfying \eqref{3.25} there exist a
two-dimensional algebra $ D $ satisfying \eqref{cond_for_B} and its
automorphism $ \eta $ such that the triple $ (E_1,E_2,E_3) $ is isomorphic to
the triple $ (F_1,F_2,F_3) $ defined as follows:

$ F_1 $ consists of all linear functionals on $ D $;

$ F_2 $ consists of all linear functionals on $ D \otimes D $ of the form $
x \otimes y \mapsto f \(x \eta (y) \) $ where $ f $ runs over $ F_1 $;

$ F_3 $ consists of all linear functionals on $ D \otimes D \otimes D $ of
the form $ x \otimes y \otimes z \mapsto f \(x \eta (y) \eta^2 (z) \) $ where
$ f $ runs over $ F_1 $.
\end{proposition}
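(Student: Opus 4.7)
The plan is to combine the classification given by Lemmas \ref{3.3}, \ref{3.4}, and \ref{3.5} with a case-by-case realization. These lemmas show that every triple $(E_1, E_2, E_3)$ satisfying \eqref{3.25} is isomorphic to one of the triples in Examples \ref{3.6}--\ref{3.10} (with Example \ref{3.8} further parametrised by $\la \in \C \setminus \{0\}$). It therefore suffices to exhibit, for each of these five scenarios, a two-dimensional algebra $D$ satisfying \eqref{cond_for_B} and an automorphism $\eta$ such that the corresponding triple $(F_1, F_2, F_3)$ coincides with the example, after which the isomorphism $\theta : F_1 \to E_1$ is just the identification of bases.

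First I would set up the dualisation in coordinates. Choose a basis $\{f_1, f_2\}$ of $D$ and write
\[
f_i \cdot \eta(f_j) = \sum_k c^k_{ij} f_k, \qquad
f_i \cdot \eta(f_j) \cdot \eta^2(f_k) = \sum_\ell d^\ell_{ijk} f_\ell \, .
\]
Identifying the dual basis $\{f_1^*, f_2^*\}$ of $D' = F_1$ with the standard basis $\{e_1, e_2\}$ of $\C^2$, one has $F_2 = \spn \bigl\{ \sum_{i,j} c^k_{ij}\, e_i \otimes e_j : k = 1,2 \bigr\}$ and $F_3 = \spn \bigl\{ \sum_{i,j,k} d^\ell_{ijk}\, e_i \otimes e_j \otimes e_k : \ell = 1,2 \bigr\}$.

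Then I would treat the five cases in turn, each by a direct computation of the structure constants. For Example \ref{3.6}: take $D = \C \oplus \C$ with componentwise multiplication (orthogonal idempotents $f_1, f_2$) and $\eta = \operatorname{id}$. For Example \ref{3.7}: the same $D$, but with $\eta$ the swap of the two idempotents. For Example \ref{3.8}: take the dual numbers $D = \C[\epsilon]/(\epsilon^2)$ with basis $\{1, \epsilon\}$ and automorphism $\eta(\epsilon) = \la\epsilon$; the factor $\la^2$ appearing in the generator of $E_3$ in Example \ref{3.8} then arises precisely from the $\eta^2$ in the triple product. For Example \ref{3.9}: take the non-unital algebra with basis $\{f_1, f_2\}$ and multiplication $f_i f_j = \delta_{j,1} f_i$, with $\eta = \operatorname{id}$. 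For Example \ref{3.10}: take its opposite $f_i f_j = \delta_{i,1} f_j$ with $\eta = \operatorname{id}$. In every case associativity of $D$, the automorphism property of $\eta$, and the surjectivity clause of \eqref{cond_for_B} are all immediate from the explicit formulas.

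There is no conceptual obstacle: the work has been done by Lemmas \ref{3.3}--\ref{3.5} (via the main Lemma \ref{1.1}), and the remaining argument is five routine computations of structure constants. The only thing requiring a little care is ensuring that the parameter $\la$ of Example \ref{3.8} is matched by a single choice of $\eta$ on the dual numbers, and that the two non-unital algebras of \ref{3.9} and \ref{3.10} really are distinct---which is visible already from the asymmetry between left and right multiplication and matches the distinction between ``$E_1 \otimes x$'' and ``$x \otimes E_1$'' in Lemma \ref{2.4}.
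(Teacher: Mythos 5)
Your proposal is correct and follows essentially the same route as the paper: reduce to Examples \ref{3.6}--\ref{3.10} via Lemmas \ref{3.3}--\ref{3.5}, then realize each example by an explicit pair $(D,\eta)$ --- and your five choices of $D$ (the split algebra with trivial or swap automorphism, the dual numbers with $\eta(\epsilon)=\la\epsilon$, and the two non-unital algebras $f_if_j=\delta_{j,1}f_i$ and $f_if_j=\delta_{i,1}f_j$) coincide with the paper's $D_1,\dots,D_4$. The structure-constant dualization and the observation that $\la^2$ comes from $\eta^2$ in the triple product are exactly the computations the paper carries out.
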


\begin{proof}
It is sufficient to check the claim for Examples \ref{3.6}--\ref{3.10}.

Case \ref{3.6}. We take $ D = \C^2 $ with the pointwise multiplication, that
is, $ (a_1,a_2) (b_1,b_2) = (a_1 b_1, a_2 b_2) $, and the trivial automorphism
$ \eta $, that is, $ \eta (a_1,a_2) = (a_1,a_2) $. In terms of the coordinate
functionals $ e_1, e_2 $ on $ D $ the multiplication is $ e_1(xy) = e_1(x)
e_1(y) $, $ e_2(xy) = e_2(x) e_2(y) $. Thus, $ F_2 = \spn \{ e_1 \otimes e_1,
\, e_2 \otimes e_2 \} $. A similar argument applies to $ F_3 $.

Case \ref{3.7}. We take $ D $ and $ e_1, e_2 $ as before, but $ \eta $ swaps
the coordinates, that is, $ \eta(a_1,a_2) = (a_2,a_1) $. Then $ e_1 \(x \eta
(y)\) = e_1(x) e_2(y) $, $ e_2\(x \eta (y)\) = e_2(x) e_1(y) $. Thus, $ F_2 =
\spn \{ e_1 \otimes e_2, \, e_2 \otimes e_1 \} $. A similar argument applies
to $ F_3 $.

\begin{sloppypar}
Case \ref{3.8}. We take $ D = \C^2 $ with the multiplication $ (a_1,a_2)
(b_1,b_2) = ( a_1 b_1, a_2 b_1 + a_1 b_2 ) $. Associativity is easy to check; 
$ (a_1,a_2) (b_1,b_2) (c_1,c_2) = ( a_1 b_1 c_1, a_2 b_1 c_1 + a_1 b_2 c_1 +
a_1 b_1 c_2 ) $. We also take the automorphism $ \eta : (a_1,a_2) \mapsto
(a_1,\la a_2) $. Then $ e_1 \(x \eta (y)\) = e_1(x) e_1(y) $, $ e_2\(x(\eta
y)\) = e_2(x) e_1(y) + \la e_1(x) e_2(y) $. Thus, $ F_2 = \spn \{ e_1 \otimes
e_2, \, e_2 \otimes e_1 + \la e_1 \otimes e_2 \} $. Similarly, $ e_1 \(x \eta
(y) \eta^2 (z)\) = e_1(x) e_1(y) e_1(z) $, $ e_2\(x \eta (y) \eta^2 (z)\) = e_2(x)
e_1(y) e_1(z) + \la e_1(x) e_2(y) e_1(z) + \la^2 e_1(x) e_1(y) e_2(z) $. Thus,
$ F_3 = \spn \{ e_1 \otimes e_1 \otimes e_1, \, e_2 \otimes e_1 \otimes e_1 +
\la e_1 \otimes e_2 \otimes e_1 + \la^2 e_1 \otimes e_1 \otimes e_2 \} $.
\end{sloppypar}

Case \ref{3.9}. We take $ D = \C^2 $ with the multiplication $ (a_1,a_2)
(b_1,b_2) = ( a_1 b_1, a_2 b_1 ) $, and the trivial
automorphism. Associativity is easy to check; $ (a_1,a_2) (b_1,b_2) (c_1,c_2)
= ( a_1 b_1 c_1, a_2 b_1 c_1 ) $. We have $ e_1(xy) = e_1(x) e_1(y) $, $
e_2(xy) = e_2(x) e_1(y) $. Thus, $ F_2 = \spn \{ e_1 \otimes e_1, \, e_2
\otimes e_1 \} $. A similar argument applies to $ F_3 $.

Case \ref{3.10} is similar to \ref{3.9}.
\end{proof}

\section[On graded algebras and their morphisms]
 {\raggedright On graded algebras and their morphisms}
\label{sec:5}
Given a graded algebra $ \A $, we consider the multiplication maps
\[
M_{s,t} : A_s \otimes \A_t \to \A_{s+t} \, , \quad M_{s,t} (x \otimes y) = xy
\]
for $ x \in \A_s, \, y \in \A_t $, and similarly
\[
M_{r,s,t} : A_r \otimes A_s \otimes \A_t \to \A_{r+s+t} \, , \quad M_{r,s,t} (x
\otimes y \otimes z) = xyz
\]
for $ x \in \A_r, \, y \in \A_s, \, z \in \A_t $. The kernel of $ M_{r,s,t} $
contains the sum of two linear subspaces,
\begin{equation}\label{5.05}
\Ker M_{r,s,t} \supset (\Ker M_{r,s}) \otimes \A_t + \A_r \otimes \Ker M_{s,t}
\end{equation}
since $ M_{r,s,t} = M_{r+s,t} ( M_{r,s} \otimes \One_t ) = M_{r,s+t} ( \One_r
\otimes M_{s,t} ) $. Sometimes we impose on $ \A $ the condition
\begin{equation}\label{5.2}
\Ker M_{r,s,t} = (\Ker M_{r,s}) \otimes \A_t + \A_r \otimes \Ker M_{s,t}
\quad \text{for all } r,s,t \in \{1,2,\dots\} \, .
\end{equation}
For the image of $ M_{s,t} $, the condition
\begin{equation}\label{5.3}
\Im M_{s,t} = \A_{s+t} \quad \text{for all } s,t \in \{1,2,\dots\}
\end{equation}
was mentioned in Sect.~1, see \eqref{cond_for_A}. We need also
\begin{gather*}
M^{(n)} : \A_1^{\otimes n} = \A_1 \otimes \dots \otimes \A_1 \to \A_n \, , \\
M^{(n)} ( x_1 \otimes \dots \otimes x_n ) = x_1 \dots x_n \quad \text{for }
 x_1,\dots,x_n \in \A_1 \, .
\end{gather*}
We consider algebras over the field $ \C $ of complex numbers only, and assume
that $ \A_0 = \C $ for each graded algebra $ \A $ under consideration.

\begin{proposition}\label{5.4}
Let graded algebras $ \A, \B $ be given together with linear maps
\[
\theta_1 : \A_1 \to \B_1 \, , \quad \theta_2 : \A_2 \to \B_2
\]
such that
\[
\theta_2 (xy) = \theta_1 (x) \theta_1 (y) \quad \text{for all } x,y \in \A_1
\, .
\]
Assume that $ \A $ satisfies \eqref{5.2} and \eqref{5.3}. Then there exist
unique linear maps $ \theta_t : \A_t \to \B_t $ for $ t = 3.4.\dots $ such
that $ \theta = (\theta_1,\theta_2,\theta_3,\theta_4,\dots) $ is a morphism of
graded algebras, that is,
\[
\theta_{s+t} (xy) = \theta_s (x) \theta_t (y) \quad \text{for all } x \in
\A_s, \, y \in \A_t, \, s,t \in \{1,2,3,4,\dots\} \, .
\]
\end{proposition}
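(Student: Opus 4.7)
The plan is to construct $\theta_t$ for $t \geq 3$ by induction on $t$, maintaining at each stage the multiplicativity $\theta_{s+t}(xy) = \theta_s(x)\theta_t(y)$ for all pairs $(s,t)$ with $s+t \leq n$. For the inductive step from $n-1$ to $n$, I would define $\theta_n$ by the rule
\[
\theta_n(xy) = \theta_{n-1}(x)\theta_1(y) \qquad (x \in \A_{n-1},\ y \in \A_1).
\]
Any morphism extension is forced by the multiplicativity condition to satisfy this rule, and since $M_{n-1,1}$ is surjective by \eqref{5.3}, this determines $\theta_n$ on all of $\A_n$; combined with the inductive uniqueness of $\theta_{n-1}$, this yields \emph{uniqueness} of $\theta_n$.

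The crux is \emph{well-definedness}: the bilinear map $(x,y) \mapsto \theta_{n-1}(x)\theta_1(y)$ must factor through $M_{n-1,1}$, i.e.\ vanish on $\Ker M_{n-1,1}$. Here hypothesis \eqref{5.2} enters essentially. Applied with $(r,s,t) = (n-2,1,1)$ it gives
\[
\Ker M_{n-2,1,1} = (\Ker M_{n-2,1}) \otimes \A_1 + \A_{n-2} \otimes \Ker M_{1,1}.
\]
Since $M_{n-2,1} \otimes \One_1$ is surjective and $M_{n-2,1,1} = M_{n-1,1} \circ (M_{n-2,1} \otimes \One_1)$, the image of this kernel under $M_{n-2,1}\otimes\One_1$ equals $\Ker M_{n-1,1}$; the first summand maps to zero, so $\Ker M_{n-1,1}$ is spanned by elements $\sum_i (ab_i) \otimes c_i$ with $a \in \A_{n-2}$, $b_i,c_i \in \A_1$, and $\sum_i b_i c_i = 0$. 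On such an element, invoking the inductive multiplicativity at level $n-1$ one gets
\[
\sum_i \theta_{n-1}(a b_i)\theta_1(c_i) = \theta_{n-2}(a) \sum_i \theta_1(b_i)\theta_1(c_i) = \theta_{n-2}(a)\theta_2\(\sum_i b_i c_i\) = 0,
\]
the second equality being the given relation $\theta_2(xy) = \theta_1(x)\theta_1(y)$.

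To complete the induction I would verify the full multiplicativity $\theta_n(ab) = \theta_s(a)\theta_t(b)$ for every $s+t = n$ with $s,t \geq 1$. The case $t = 1$ is the defining rule. For $t \geq 2$, I would use \eqref{5.3} to write $b = \sum_i b_i c_i$ with $b_i \in \A_{t-1}$, $c_i \in \A_1$, and chain the inductive multiplicativities at levels $n-1$ and $t$:
\[
\theta_n(ab) = \sum_i \theta_{n-1}(a b_i)\theta_1(c_i) = \theta_s(a)\sum_i \theta_{t-1}(b_i)\theta_1(c_i) = \theta_s(a)\theta_t(b).
\]

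The hard part is the well-definedness step; hypothesis \eqref{5.2} is precisely what is needed to express $\Ker M_{n-1,1}$ as the image of $\A_{n-2} \otimes \Ker M_{1,1}$, on which the pair $(\theta_1,\theta_2)$ is compatible by assumption. Without \eqref{5.2} the construction would already stall at $t = 3$.
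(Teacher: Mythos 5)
Your argument is correct, and it takes a somewhat different route from the paper's. The paper constructs $\theta_n$ by factoring through the full tensor power: it proves that $M^{(n)}:\A_1^{\otimes n}\to\A_n$ is surjective (Lemma \ref{5.5}) and that $\Ker M^{(n)}$ is generated degree-by-degree, namely $\Ker M^{(n+1)}=(\Ker M^{(n)})\otimes\A_1+\A_1\otimes\Ker M^{(n)}$ (Lemma \ref{5.7}, which uses \eqref{5.2} for the triple $(1,n-1,1)$), and then shows by induction that $\Ker M_\A^{(n)}\subset\Ker(M_\B^{(n)}\theta_1^{\otimes n})$, so that $\theta_n$ is induced from $\theta_1^{\otimes n}$; multiplicativity is then automatic on products of degree-one elements and transfers to all of $\A_s\otimes\A_t$ by surjectivity. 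You instead extend degree by degree through the surjection $M_{n-1,1}:\A_{n-1}\otimes\A_1\to\A_n$ (surjective by \eqref{5.3}), and your well-definedness step uses \eqref{5.2} for the triple $(n-2,1,1)$ together with surjectivity of $M_{n-2,1}\otimes\One_1$ to identify $\Ker M_{n-1,1}$ with the image of $\A_{n-2}\otimes\Ker M_{1,1}$, on which the compatibility of $(\theta_1,\theta_2)$ kills everything; you then verify full multiplicativity for all splittings $s+t=n$ by writing $b$ as a sum of products via \eqref{5.3} and chaining the inductive hypotheses. Both proofs are inductions driven by exactly the same two hypotheses, but yours is more self-contained (it never introduces $M^{(n)}$ or the auxiliary Lemmas \ref{5.5} and \ref{5.7}), at the price of having to check multiplicativity for every splitting by hand; the paper's detour through $\A_1^{\otimes n}$ makes that check essentially automatic and, in addition, Lemma \ref{5.5} is reused later in the proof of Theorem \ref{main_theorem}. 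All the steps you use (surjectivity of $M_{n-2,1}\otimes\One_1$ and of $M_{t-1,1}$, the vanishing of the first summand $(\Ker M_{n-2,1})\otimes\A_1$ under $M_{n-2,1}\otimes\One_1$, and the availability of the pair $(n-2,1)$ at the inductive stage, which for $n=3$ is the given relation between $\theta_2$ and $\theta_1$) are justified by \eqref{5.3}, \eqref{5.2} and the induction hypothesis, so I see no gap.
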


The proof is given after several lemmas.

\begin{lemma}\label{5.5}
If $ \A $ satisfies \eqref{5.3} then $ \Im M^{(n)} = \A_n $ for all $ n =
2,3,\dots $
\end{lemma}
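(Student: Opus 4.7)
The plan is a straightforward induction on $n$, using associativity to decompose $M^{(n)}$ through $M^{(n-1)}$ and one application of the two-factor multiplication $M_{s,t}$.

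For the base case $n=2$, the map $M^{(2)}$ is literally $M_{1,1}$, whose surjectivity is part of the hypothesis \eqref{5.3}. For the inductive step, assume $\Im M^{(n-1)} = \A_{n-1}$ for some $n \ge 3$. By associativity of $\A$, the $n$-fold product factors as
\[
M^{(n)} = M_{n-1,1} \circ \bigl( M^{(n-1)} \otimes \One_1 \bigr) : \A_1^{\otimes(n-1)} \otimes \A_1 \to \A_{n-1} \otimes \A_1 \to \A_n \, .
\]
By the inductive hypothesis $M^{(n-1)} \otimes \One_1$ is surjective onto $\A_{n-1} \otimes \A_1$, and by \eqref{5.3} the second map $M_{n-1,1}$ is surjective onto $\A_n$. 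The composition of two surjections is surjective, so $\Im M^{(n)} = \A_n$.

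There is no real obstacle here; the lemma is essentially a bookkeeping consequence of associativity and the two-factor surjectivity assumption, and condition \eqref{5.2} is not needed. The only minor point worth stating explicitly is the factorization above, which is exactly the identity $M_{r,s,t} = M_{r+s,t}(M_{r,s}\otimes\One_t)$ already invoked in the derivation of \eqref{5.05}, specialized to $r = n-2$, $s = 1$, $t = 1$ and then iterated.
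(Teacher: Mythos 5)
Your proof is correct and is essentially identical to the paper's: both argue by induction, factoring $M^{(n)}$ as $M_{n-1,1}\circ(M^{(n-1)}\otimes\One_1)$ and composing two surjections. Your added remark that \eqref{5.2} is not needed is also accurate.
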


\begin{proof}
Induction in $ n $. First, $ M^{(2)} = M_{1,1} $, thus $ \Im M^{(2)} = \A_2 $
by \eqref{5.3}. Second, assume that $ \Im M^{(n)} = \A_n $. We have $
M^{(n+1)} = M_{n,1} ( M^{(n)} \otimes \One_1 ) $ and $ \Im ( M^{(n)} \otimes
\One_1 ) = \A_n \otimes \A_1 $, thus $ \Im M^{(n+1)} = \Im M_{n,1} = \A_{n+1}
$ by \eqref{5.3}.
\end{proof}

It is easy to see that
\begin{equation}\label{5.6}
\Im(\al) = F \quad \text{implies} \quad \al^{-1} (F_1+F_2) = \al^{-1} (F_1) +
\al^{-1} (F_2)
\end{equation}
whenever $ E,F $ are linear spaces, $ \al : E \to F $ a linear map, and $ F_1,
F_2 \subset F $ linear subspaces.

\begin{lemma}\label{5.7}
If $ \A $ satisfies \eqref{5.2} then $ \Ker M^{(n+1)} = ( \Ker M^{(n)} )
\otimes \A_1 + \A_1 \otimes \Ker M^{(n)} $ for all $ n = 2,3,\dots $
\end{lemma}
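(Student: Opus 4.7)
The direct inclusion $(\Ker M^{(n)}) \otimes \A_1 + \A_1 \otimes \Ker M^{(n)} \subseteq \Ker M^{(n+1)}$ is immediate from the two factorizations $M^{(n+1)} = M_{n,1} \circ (M^{(n)} \otimes \One_1) = M_{1,n} \circ (\One_1 \otimes M^{(n)})$: any element of either summand is clearly annihilated.

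For the reverse inclusion I plan to use the ``split in the middle'' factorization
\[
M^{(n+1)} = M_{1, n-1, 1} \circ g \, , \qquad g := \One_1 \otimes M^{(n-1)} \otimes \One_1 \, ,
\]
so that $\Ker M^{(n+1)} = g^{-1}(\Ker M_{1, n-1, 1})$. Hypothesis \eqref{5.2} gives
\[
\Ker M_{1, n-1, 1} = (\Ker M_{1, n-1}) \otimes \A_1 + \A_1 \otimes \Ker M_{n-1, 1} \, .
\]
In the context of this paper every graded algebra also satisfies \eqref{5.3}; by Lemma \ref{5.5} the map $M^{(n-1)}$ is then surjective (with the convention $M^{(1)} = \One_1$), hence so is $g$, and \eqref{5.6} lets the preimage distribute over the sum. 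Next, the elementary identity $(f \otimes \One_F)^{-1}(U \otimes F) = f^{-1}(U) \otimes F$, valid for any linear map $f$, yields
\[
g^{-1}\bigl( (\Ker M_{1, n-1}) \otimes \A_1 \bigr) = \bigl( (\One_1 \otimes M^{(n-1)})^{-1}(\Ker M_{1, n-1}) \bigr) \otimes \A_1 \, ,
\]
and the factorization $M^{(n)} = M_{1, n-1} \circ (\One_1 \otimes M^{(n-1)})$ identifies the inner preimage with $\Ker M^{(n)}$. Symmetrically, $g^{-1}(\A_1 \otimes \Ker M_{n-1, 1}) = \A_1 \otimes \Ker M^{(n)}$, so summation delivers the desired equality. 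The case $n = 2$ is literally \eqref{5.2} with $r = s = t = 1$.

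The one spot where care is needed is the appeal to surjectivity of $g$ in order to split the preimage across a sum via \eqref{5.6}; it is there that \eqref{5.3} enters through Lemma \ref{5.5}. Apart from this, the argument is a mechanical unwinding of the definitions of $M^{(n)}$, $M_{s,t}$, and $M_{r,s,t}$ combined with associativity.
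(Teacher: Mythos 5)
Your proof is correct and follows essentially the same route as the paper: the factorization $M^{(n+1)} = M_{1,n-1,1}\,( \One_1 \otimes M^{(n-1)} \otimes \One_1 )$, hypothesis \eqref{5.2} applied to $M_{1,n-1,1}$, surjectivity via Lemma \ref{5.5} so that \eqref{5.6} splits the preimage of the sum, and the identification of the two resulting preimages with $\Ker M^{(n)}$. Your caveat that \eqref{5.3} enters through Lemma \ref{5.5} applies equally to the paper's own proof, which invokes Lemma \ref{5.5} at the same spot although the lemma's statement lists only \eqref{5.2}; in the actual application (Proposition \ref{5.4}) both conditions hold, so nothing is lost.
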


\begin{proof}
$ M^{(n+1)} = M_{1,n-1,1} ( \One_1 \otimes M^{(n-1)} \otimes \One_1 ) $, and $
\Im ( \One_1 \otimes M^{(n-1)} \otimes \One_1 ) = \A_1 \otimes \A_{n-1}
\otimes \A_1 $ by \ref{5.5}; using \eqref{5.2} and \eqref{5.6} we get
\begin{multline*}
\Ker M^{(n+1)} = ( \One_1 \otimes M^{(n-1)} \otimes \One_1 )^{-1} ( \Ker
 M_{1,n-1,1} ) = \\
= ( \One_1 \otimes M^{(n-1)} \otimes \One_1 )^{-1} \( (\Ker
 M_{1,n-1}) \otimes \A_1 + \A_1 \otimes \Ker M_{n-1,1} \) = \\
= ( \One_1 \otimes M^{(n-1)} \otimes \One_1 )^{-1} \( (\Ker M_{1,n-1}) \otimes
 \A_1 \) + \\
+ ( \One_1 \otimes M^{(n-1)} \otimes \One_1 )^{-1} \( \A_1 \otimes
 \Ker M_{n-1,1} \) = \\
= \underbrace{ ( \One_1 \otimes M^{(n-1)} )^{-1} ( \Ker M_{1,n-1} ) }_{=\Ker
  M^{(n)}} \otimes \A_1 + \A_1 \otimes \underbrace{ ( M^{(n-1)} \otimes \One_1
  )^{-1} ( \Ker M_{n-1,1} ) }_{=\Ker M^{(n)}} \, .
\end{multline*}
\end{proof}

\begin{proof}[Proof of Prop.~\ref{5.4}]
Uniqueness of $ \theta $ follows from \ref{5.5}, since $ \theta_n (x_1\dots
x_n) $ must be equal to $ \theta_1(x_1) \dots \theta_1(x_n) $ for all $ x_1,
\dots, x_n \in \A_1 $.

\begin{sloppypar}
Existence. It is sufficient to construct $ \theta_n $ such that $ \theta_n
(x_1\dots x_n) = \theta_1(x_1) \dots \theta_1(x_n) $ for all $ x_1,\dots, x_n
\in \A_1 $, since then $ \theta_{s+t} ( x_1 \dots x_s y_1 \dots y_t ) =
\theta_s ( x_1 \dots x_s ) \theta_t ( y_1 \dots y_t ) $ for all $
x_1,\dots,x_s, y_1,\dots,y_t \in \A_1 $ and therefore, using \ref{5.5}, $
\theta_{s+t} (xy) = \theta_s (x) \theta_t (y) $ for all $ x \in \A_s $, $ y
\in \A_t $. Thus, we need $ \theta_n $ that makes the diagram
\[
\xymatrix{
 \A_1^{\otimes n} \ar[r]^{M_\A^{(n)}} \ar[d]_{\theta_1^{\otimes n}} & \A_n
 \ar@{.>}[d]^{\theta_n}
\\
 \B_1^{\otimes n} \ar[r]^{M_\B^{(n)}} & \B_n
}
\]
commutative.
\end{sloppypar}

\begin{sloppypar}
By \ref{5.5}, $ \Im M_\A^{(n)} = \A_n $. It is sufficient to prove that
\[
\Ker M_\A^{(n)} \subset \Ker ( M_\B^{(n)} \theta_1^{\otimes n} ) \, .
\]
We do it by induction in $ n $. For $ n=2 $ it is ensured by the existence of
$ \theta_2 $. Let $ n > 2 $. By \ref{5.7}, $ \Ker M_\A^{(n)} = ( \Ker
M_\A^{(n-1)} ) \otimes \A_1 + \A_1 \otimes \Ker M_\A^{(n-1)} $ (since $ \A $
satisfies \eqref{5.2}). On the other hand, $ ( \Ker M_\B^{(n-1)} ) \otimes
\B_1 \subset \Ker M_\B^{(n)} $ and $ \B_1 \otimes \Ker M_\B^{(n-1)}  \subset
\Ker M_\B^{(n)} $ by \eqref{5.05}. Finally, $ \Ker M_\A^{(n-1)} \subset \Ker ( M_\B^{(n-1)}
\theta_1^{\otimes(n-1)} ) $ implies $ ( \Ker M_\A^{(n-1)} ) \otimes \A_1
\subset (\theta_1^{\otimes n})^{-1} \( ( \Ker M_\B^{(n-1)} ) \otimes \B_1 \) $
and $ \A_1 \otimes \Ker M_\A^{(n-1)} \subset (\theta_1^{\otimes n})^{-1} \(
\B_1 \otimes \Ker M_\B^{(n-1)} \) $.
\end{sloppypar}
\end{proof}

Let $ \A $ be a graded algebra and $ f $ its automorphism; that is, $ f_t :
\A_t \to \A_t $ is an invertible linear map, and $ f_{s+t}(xy) = f_s(x) f_t(y)
$ for all $ x \in \A_s $, $ y \in \A_t $ and $ s,t \in \{1,2,\dots\} $. Then
the binary operation
\[
\A_s \times \A_t \ni (x,y) \mapsto x f_t^s(y) \in \A_{s+t}
\]
(where $ f_t^s $ is $ f_t $ iterated $ s $ times) is associative:
\[
\( x f_s^r(y) \) f_t^{r+s}(z) = x f_{s+t}^r \( y f_t^s(z) \)
\]
for all $ x \in \A_r $, $ y \in \A_s $ and $ z \in \A_t $. Thus we get a new
graded algebra $ \A^{(f)} $.

\begin{lemma}\label{5.8}
If $ \A $ satisfies \eqref{5.3} then also $ \A^{(f)} $ satisfies \eqref{5.3}.
\end{lemma}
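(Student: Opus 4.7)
The plan is extremely short, because the new multiplication $M^{(f)}_{s,t}$ differs from the old one $M_{s,t}$ only by precomposing with an invertible map on $\A_s \otimes \A_t$. So surjectivity is preserved automatically.

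Concretely, I would unfold the definition: for $x \in \A_s$ and $y \in \A_t$,
\[
M^{(f)}_{s,t}(x \otimes y) = x f_t^s(y) = M_{s,t}\bigl( x \otimes f_t^s(y) \bigr) = M_{s,t} \circ (\One_s \otimes f_t^s)(x \otimes y).
\]
Since $f_t : \A_t \to \A_t$ is invertible (as $f$ is an automorphism of $\A$), its iterate $f_t^s$ is invertible, hence $\One_s \otimes f_t^s : \A_s \otimes \A_t \to \A_s \otimes \A_t$ is a linear bijection. Therefore
\[
\Im M^{(f)}_{s,t} = \Im\bigl( M_{s,t} \circ (\One_s \otimes f_t^s) \bigr) = \Im M_{s,t} = \A_{s+t},
\]
where the last equality uses that $\A$ satisfies \eqref{5.3}. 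This holds for all $s,t \in \{1,2,\dots\}$, so $\A^{(f)}$ also satisfies \eqref{5.3}.

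There is essentially no obstacle here; the only thing to be careful about is correctly identifying the iterate $f_t^s$ (acting on the second tensor factor $\A_t$) as the map being composed with $M_{s,t}$, and noting that automorphy of $f$ gives invertibility degree by degree. No induction, no use of \eqref{5.2}, and no appeal to the earlier classification results is needed.
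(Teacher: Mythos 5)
Your proof is correct and is essentially identical to the paper's: both write $M^{(f)}_{s,t} = M_{s,t}\circ(\One_s\otimes f_t^s)$ and use surjectivity of $\One_s\otimes f_t^s$ to conclude $\Im M^{(f)}_{s,t} = \Im M_{s,t} = \A_{s+t}$. Nothing to add.
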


\begin{proof}
$ M_{s,t}^{(f)} = M_{s,t} ( \One_s \otimes f_t^s ) $ and $ \Im ( \One_s
\otimes f_t^s ) = \A_s \otimes \A_t $, thus $ \Im M_{s,t}^{(f)} = \Im M_{s,t}
= \A_{s+t} $.
\end{proof}

\begin{lemma}\label{5.9}
If $ \A $ satisfies \eqref{5.2} then also $ \A^{(f)} $ satisfies \eqref{5.2}.
\end{lemma}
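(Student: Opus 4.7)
The plan is to express \eqref{5.2} for $\A^{(f)}$ as the pullback of \eqref{5.2} for $\A$ under a linear bijection, and then use the hypothesis on $\A$. The decisive identity is
\[
M^{(f)}_{r,s,t} = M_{r,s,t} \circ (\One_r \otimes f_s^r \otimes f_t^{r+s}),
\]
obtained by expanding $x \cdot y \cdot z$ twice via $u \cdot v = u f_t^s(v)$; similarly $M^{(f)}_{r,s} = M_{r,s}(\One_r \otimes f_s^r)$ and $M^{(f)}_{s,t} = M_{s,t}(\One_s \otimes f_t^s)$. Since tensor products of invertible linear maps are invertible, preimages commute with sums, and applying \eqref{5.2} for $\A$ yields
\[
\Ker M^{(f)}_{r,s,t} = (\One_r \otimes f_s^r \otimes f_t^{r+s})^{-1}\bigl((\Ker M_{r,s})\otimes\A_t\bigr) + (\One_r \otimes f_s^r \otimes f_t^{r+s})^{-1}\bigl(\A_r\otimes\Ker M_{s,t}\bigr).
\]

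The first preimage equals $(\Ker M^{(f)}_{r,s}) \otimes \A_t$, since $(\One_r \otimes f_s^r)^{-1}(\Ker M_{r,s}) = \Ker M^{(f)}_{r,s}$ and $f_t^{r+s}$ is a bijection of $\A_t$. The main point lies in the second preimage. The key observation is that the automorphism property $f_{s+t} M_{s,t} = M_{s,t}(f_s \otimes f_t)$, combined with invertibility of $f_{s+t}$, gives $(f_s \otimes f_t)^{-1}(\Ker M_{s,t}) = \Ker M_{s,t}$, and iteration yields $(f_s^r \otimes f_t^r)^{-1}(\Ker M_{s,t}) = \Ker M_{s,t}$. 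Factoring $f_s^r \otimes f_t^{r+s} = (f_s^r \otimes f_t^r)(\One_s \otimes f_t^s)$ then reduces the second preimage to $(\One_s \otimes f_t^s)^{-1}(\Ker M_{s,t}) = \Ker M^{(f)}_{s,t}$, so that the second summand is $\A_r \otimes \Ker M^{(f)}_{s,t}$.

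Adding the two summands produces exactly \eqref{5.2} for $\A^{(f)}$. The only step beyond routine bookkeeping is the invariance of $\Ker M_{s,t}$ under $f_s \otimes f_t$, and this is immediate from $f$ being a graded automorphism of $\A$; everything else is standard manipulation of preimages and tensor factors under linear bijections.
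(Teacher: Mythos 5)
Your proof is correct and follows essentially the same route as the paper's: both rest on the factorization $M^{(f)}_{r,s,t}=M_{r,s,t}(\One_r\otimes f_s^r\otimes f_t^{r+s})$, the invariance of $\Ker M_{s,t}$ under $f_s\otimes f_t$ (iterated), and the splitting $f_s^r\otimes f_t^{r+s}=(f_s^r\otimes f_t^r)(\One_s\otimes f_t^s)$. The only cosmetic difference is that the paper phrases everything via forward images under $g_s=f_s^{-1}$ rather than preimages under $f_s$.
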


\begin{proof}
Denoting for convenience $ \Ker M_{r,s} $ by $ K_{r,s} $ and $ f_s^{-1} $ by $
g_s $ (and similarly $ K_{s,t} $, $ g_t $) we have
\begin{align*}
& \Ker M_{r,s}^{(f)} = ( \One_r \otimes g_s^r ) K_{r,s} \, , & &\text{since }
 M_{r,s}^{(f)} = M_{r,s} ( \One \otimes f_s^r ) \, ; \\
& \Ker M_{s,t}^{(f)} = ( \One_s \otimes g_t^s ) K_{s,t} \, , &
 &\text{similarly;} \\
& \Ker M_{r,s,t}^{(f)} = ( \One_r \otimes g_s^r \otimes g_t^{r+s} ) \Ker
 M_{r,s,t} \, , & &\text{similarly;} \\
& \Ker M_{r,s,t} = K_{r,s} \otimes \A_t + \A_r \otimes K_{s,t} & &\text{by
 \eqref{5.2} for $ \A $;} \\
& ( g_s \otimes g_t ) K_{s,t} = K_{s,t} \, , & &\text{since } M_{s,t} ( f_s
\otimes f_t ) = f_{s+t} M_{s,t} \, .
\end{align*}
Thus,
\begin{multline*}
\Ker M_{r,s,t}^{(f)} = ( \One_r \otimes g_s^r \otimes g_t^{r+s} ) ( K_{r,s} \otimes \A_t + \A_r
 \otimes K_{s,t} ) = \\
= \( ( \One_r \otimes g_s^r ) K_{r,s} \) \otimes \A_t + \A_r \otimes ( \One_s \otimes g_t^s )
 \underbrace{ ( g_s^r \otimes g_t^r ) K_{s,t} }_{=K_{s,t}} = \\
= ( \Ker M_{r,s}^{(f)} ) \otimes \A_t + \A_r \otimes \Ker M_{s,t}^{(f)} \, .
\end{multline*}
\end{proof}

\section[Proof of the theorem]
 {\raggedright Proof of the theorem}
\label{sec:6}
Each two-dimensional algebra $ D $ given by Proposition \ref{3.14} is one of the
following four algebras: $ D_1 = D_2 = D_3 = D_4 = \C^2 $ as a linear space,
with the multiplication
\[
(a_1,a_2) (b_1,b_2) = \begin{cases}
 (a_1 b_1, a_2 b_2) &\text{for } D_1,\\
 (a_1 b_1, a_2 b_1 + a_1 b_2) &\text{for } D_2,\\
 (a_1 b_1, a_2 b_1) &\text{for } D_3,\\
 (a_1 b_1, a_1 b_2) &\text{for } D_4.
\end{cases}
\]
Algebras $ D_1 $ and $ D_2 $ contain units; $ D_3 $ and $ D_4 $ do not.

\begin{remark}
Similarly to the well-known classification of low-dimensional algebras with
unit \cite{St} one can classify all two-dimensional algebras (associative, not
necessarily with unit). They are four algebras $ D_1, D_2, D_3, D_4 $
satisfying \eqref{cond_for_B} and three algebras $ D_5, D_6, D_7 $ violating
\eqref{cond_for_B}:
\[
(a_1,a_2) (b_1,b_2) = \begin{cases}
 (a_1 b_1, 0) &\text{for } D_5,\\
 (0, a_1 b_1) &\text{for } D_6,\\
 (0, 0) &\text{for } D_7.
\end{cases}
\]
However, this classification is not used in the present work.
\end{remark}

\begin{lemma}\label{6.1}
The condition
\begin{equation}\label{6.2}
\Ker \mu_3 = ( \Ker \mu_2 ) \otimes D + D \otimes \Ker \mu_2
\end{equation}
is satisfied by each one of the algebras $ D_1, D_2, D_3, D_4 $; here $ \mu_2
: D \otimes D \to D $ and $ \mu_3 : D \otimes D \otimes D \to D $ are the
multiplication maps.
\end{lemma}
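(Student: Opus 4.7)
The inclusion $\supseteq$ in \eqref{6.2} is not a separate assertion: from the factorizations $\mu_3 = \mu_2(\mu_2 \otimes \One_D) = \mu_2(\One_D \otimes \mu_2)$ one immediately gets $\Ker \mu_3 \supseteq \Ker(\mu_2 \otimes \One_D) = (\Ker \mu_2) \otimes D$ and similarly $\Ker \mu_3 \supseteq D \otimes \Ker \mu_2$. So I only need the reverse inclusion, and the plan is to reduce it to a dimension count done once per algebra.

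Each $D_i$ satisfies \eqref{cond_for_B}, so $\mu_2 : D\otimes D \to D$ is a surjection from a $4$-dimensional to a $2$-dimensional space; hence $\dim \Ker \mu_2 = 2$ and, since $\mu_3$ factors through $\mu_2$ and is therefore surjective as well, $\dim \Ker \mu_3 = 8-2 = 6$. The two summands on the right of \eqref{6.2} each have dimension $2\cdot 2 = 4$, so the equality to be proved is equivalent to
\[
\dim \bigl( (\Ker\mu_2)\otimes D \;\cap\; D\otimes \Ker\mu_2 \bigr) = 2 .
\]

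I would now verify this by direct computation for each of $D_1,D_2,D_3,D_4$. With $e_1 = (1,0)$, $e_2 = (0,1)$, the multiplication tables give
$\Ker\mu_2(D_1) = \spn\{e_1\otimes e_2,\,e_2\otimes e_1\}$,
$\Ker\mu_2(D_2) = \spn\{e_1\otimes e_2 - e_2\otimes e_1,\,e_2\otimes e_2\}$,
$\Ker\mu_2(D_3) = \spn\{e_1\otimes e_2,\,e_2\otimes e_2\}$,
$\Ker\mu_2(D_4) = \spn\{e_2\otimes e_1,\,e_2\otimes e_2\}$.
In each case I would expand the $4+4$ generators of $(\Ker\mu_2)\otimes D$ and $D\otimes(\Ker\mu_2)$ in the basis $\{e_i\otimes e_j\otimes e_k\}$ of $D^{\otimes 3}$ and check that their span has dimension $6$ — equivalently, that the corresponding $8\times 8$ coefficient matrix has rank $6$. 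For $D_1,D_3,D_4$ this is immediate because $\Ker \mu_2$ is spanned by ``monomials'' $e_i\otimes e_j$, so the sum is visibly the span of a subset of the basis $\{e_i\otimes e_j\otimes e_k\}$ of the right size.

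The only case with any content is $D_2$, where the element $e_1\otimes e_2 - e_2\otimes e_1 \in \Ker\mu_2$ mixes basis vectors. Here I would list an explicit basis of the sum, for instance $(e_1\otimes e_2 - e_2\otimes e_1)\otimes e_j$ and $e_2\otimes e_2\otimes e_j$ for $j=1,2$, together with $e_i\otimes(e_1\otimes e_2 - e_2\otimes e_1)$ and $e_i\otimes e_2\otimes e_2$ for $i=1,2$, and check by inspection that the resulting $8$ vectors span a $6$-dimensional subspace (the intersection being spanned by, e.g., $e_2\otimes e_2\otimes e_2$ and a suitable mixed element like $(e_1\otimes e_2 - e_2\otimes e_1)\otimes e_2 + e_2\otimes(e_1\otimes e_2 - e_2\otimes e_1)$ up to scaling). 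This is the only place where the computation is not completely mechanical, and it is still routine.
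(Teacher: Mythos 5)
Your proof is correct, and it takes a genuinely different route from the paper. You reduce everything to one uniform dimension count: both summands lie in $\Ker\mu_3$ because $\mu_3=\mu_2(\mu_2\otimes\One_D)=\mu_2(\One_D\otimes\mu_2)$, surjectivity of $\mu_2$ and $\mu_3$ gives $\dim\Ker\mu_2=2$ and $\dim\Ker\mu_3=6$, so \eqref{6.2} amounts to the right-hand side being $6$-dimensional, which you verify from explicit bases of $\Ker\mu_2$ in the four cases. Your kernels are correct, and the only case with content, $D_2$, does come out right: writing $w=e_1\otimes e_2-e_2\otimes e_1$, the eight generators satisfy exactly two independent relations, namely the repetition of $e_2\otimes e_2\otimes e_2$ and $w\otimes e_2+e_2\otimes w=e_1\otimes e_2\otimes e_2-e_2\otimes e_2\otimes e_1$, so the span has dimension $6$. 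The paper instead splits the four algebras by the presence of a unit: for $D_1,D_2$ it gives a computation-free argument, showing $u-1\otimes(\mu_3 u)\otimes 1\in(\Ker\mu_2)\otimes D+D\otimes\Ker\mu_2$ for every $u$, which works for any unital algebra in any dimension; only $D_3$ (and symmetrically $D_4$) is treated by hand, via $\Ker\mu_2=D\otimes e_2$ and the identification of both sides with $D\otimes E$, $E=\spn\{e_1\otimes e_2,e_2\otimes e_1,e_2\otimes e_2\}$. Your scheme is more mechanical and treats all four algebras on the same footing, but it is tied to the specific dimensions ($4+4-6=2$); the paper's unital half is more conceptual and generalizes beyond dimension two.

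One small inaccuracy in your parenthetical aside on $D_2$: the element $w\otimes e_2+e_2\otimes w=e_1\otimes e_2\otimes e_2-e_2\otimes e_2\otimes e_1$ is not in the intersection of the two summands --- applying $\mu_2\otimes\One_D$ to it gives $e_2\otimes e_2\ne 0$, so it lies in $\Ker\mu_3$ but in neither summand. The intersection is in fact spanned by $e_2\otimes e_2\otimes e_2$ and $w\otimes e_2+e_2\otimes e_2\otimes e_1=e_1\otimes e_2\otimes e_2-e_2\otimes w$. This does not affect your argument, since the operative check is the rank of the eight generators (equivalently, that the intersection is $2$-dimensional), not the explicit identification of the intersection.
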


\begin{proof}
First, the condition is satisfied by every algebra $ D $ with unit, since
\begin{gather*}
x \otimes y - 1 \otimes (xy) \in \Ker \mu_2 \, ; \\
x \otimes y \otimes z - 1 \otimes (xy) \otimes z \in ( \Ker \mu_2 ) \otimes D
 \, ; \\
(xy) \otimes z - (xyz) \otimes 1 \in \Ker \mu_2 \, ; \\
1 \otimes (xy) \otimes z - 1 \otimes (xyz) \otimes 1 \in D \otimes \Ker \mu_2 \, ; \\
x \otimes y \otimes z - 1 \otimes (xyz) \otimes 1 \in ( \Ker \mu_2 ) \otimes D
 + D \otimes \Ker \mu_2
\end{gather*}
for all $ x,y,z \in D $. That is,
\[
u - 1 \otimes ( \mu_3 u ) \otimes 1 \in ( \Ker \mu_2 ) \otimes D + D \otimes
\Ker \mu_2
\]
for all $ u \in D \otimes D \otimes D $. Thus, $ u \in \Ker \mu_3 $ implies $
u \in ( \Ker \mu_2 ) \otimes D + D \otimes \Ker \mu_2 $.

It remains to check the condition for $ D_3 $ ($ D_4 $ is similar). In terms
of the basis vectors $ e_1=(1,0) $ and $ e_2=(0,1) $ we have $ e_1 e_1 = e_1
$, $ e_2 e_1 = e_2 $, $ e_1 e_2 = 0 $, $ e_2 e_2 = 0 $, thus $ \Ker \mu_2 = D
\otimes e_2 $. Denote $ E = \spn \{ e_1 \otimes e_2, e_2 \otimes e_1, e_2
\otimes e_2 \} $ and $ F = \spn \{ e_1 \otimes e_1 \} $, then $ D \otimes D =
E \oplus F $ and $ ( \Ker \mu_2 ) \otimes D + D \otimes \Ker \mu_2 = D \otimes
e_2 \otimes D + D \otimes D \otimes e_2 = D \otimes E $. It remains to check
that $ \Ker \mu_3 = D \otimes E $. On one hand, $ \Ker \mu_3 \supset D \otimes
E $. On the other hand, $ \Ker \mu_3 \cap ( D \otimes F ) = \{0\} $, since $ x
e_1 e_1 = x $ for all $ x \in D $.
\end{proof}

\begin{proof}[Proof of Theorem \ref{main_theorem}]
Let $ \A $ be a graded algebra satisfying \eqref{cond_for_A}. The
multiplication maps $ M_{1,1} : \A_1 \otimes \A_1 \to \A_2 $ and $ M_{1,1,1} :
\A_1 \otimes \A_1 \otimes \A_1 \to \A_3 $ are surjective (by
\eqref{cond_for_A} and \ref{5.5}); the dual maps $ M'_{1,1} : \A'_2 \to
\A'_1 \otimes \A'_1 $ and $ M'_{1,1,1} : \A'_3 \to \A'_1 \otimes \A'_1
\otimes \A'_1 $ are injective; thus, the spaces
\begin{align*}
E_1 &= \A'_1 \, , \\
E_2 &= \Im M'_{1,1} \subset E_1 \otimes E_1 \, , \\
E_3 &= \Im M'_{1,1,1} \subset E_1 \otimes E_1 \otimes E_1
\end{align*}
are two-dimensional. Their annihilators $ E_2^\perp = \Ker M_{1,1} $, $
E_3^\perp = \Ker M_{1,1,1} $ satisfy $ E_3^\perp \supset E_2^\perp \otimes \A_1
+ \A_1 \otimes E_2^\perp $ by \eqref{5.05}; therefore $ E_3 \subset ( E_2
\otimes E_1 ) \cap ( E_1 \otimes E_2 ) $, which shows that the triple $
(E_1,E_2,E_3) $ satisfies \eqref{3.25}.

\begin{sloppypar}
By Proposition \ref{3.14} the triple $ (E_1,E_2,E_3) $ is isomorphic to a triple $
(F_1,F_2,F_3) $ resulting from a two-dimensional algebra $ D $ satisfying
\eqref{cond_for_B} and its automorphism $ \eta $ by a construction that is in
fact a fragment of the construction \eqref{the_construction}. That is, we
construct a graded algebra $ \B $ satisfying \eqref{cond_for_A} out of $ D $
and $ \eta $ by \eqref{the_construction}. Then we construct
\begin{align*}
F_1 &= \B'_1 \, , \\
F_2 &= \Im ( M_{1,1}^\B )' \subset F_1 \otimes F_1 \, , \\
F_3 &= \Im ( M_{1,1,1}^\B )' \subset F_1 \otimes F_1 \otimes F_1
\end{align*}
($ M_{1,1}^\B : \B_1 \otimes \B_1 \to \B_2 $ and $ M_{1,1,1}^\B : \B_1 \otimes
\B_1 \otimes \B_1 \to \B_3 $ being the multiplication maps) and conclude that
the triple $ (F_1,F_2,F_3) $ is isomorphic (as defined in Sect.~\ref{sec:4})
to $ (E_1,E_2,E_3) $.
\end{sloppypar}

It remains to prove that the graded algebras $ \A $ and $ \B $ are
isomorphic.

We have an isomorphism between $ (F_1,F_2,F_3) $ and $ (E_1,E_2,E_3) $, --- an
invertible linear map $ \al : E_1 \to F_1 $ such that $ ( \al \otimes \al )
E_2 = F_2 $.\footnote{%
 We also have $ ( \al \otimes \al \otimes \al ) E_3 = F_3 $ but we do not use
 it, and in fact, we'll get it again (implicitly) via Prop.~\ref{5.4}.}
However, $ E_1 = \A'_1 $, $ F_1 = \B'_1 $, $ E_2 = \Im (M_{1,1}^\A)' = ( \Ker
M_{1,1}^\A )^\perp $ and $ F_2 = \Im (M_{1,1}^\B)' = ( \Ker M_{1,1}^\B )^\perp
$. We introduce the dual map $ \theta_1 = \al' $; it satisfies
\begin{gather*}
\theta_1 : \B_1 \to \A_1 \, , \\
( \theta_1 \otimes \theta_1 ) \Ker M_{1,1}^\B = \Ker M_{1,1}^\A \, .
\end{gather*}
Taking into account that $ \Im M_{1,1}^\B = \B_2 $ (since $ \B $ satisfies
\eqref{cond_for_A}) we get $ \theta_2 : \B_2 \to \A_2 $ such that $ \theta_2
(xy) = \theta_1(x) \theta_1(y) $ for all $ x,y \in \B_1 $.
\[
\xymatrix{
 \B_1 \otimes \B_1 \ar[r]^{M_{1,1}^\B} \ar[d]_{\theta_1\otimes\theta_1} & \B_2
 \ar@{.>}[d]^{\theta_2}
\\
 \A_1 \otimes \A_1 \ar[r]^{M_{1,1}^\A} & \A_2
}
\]

The algebra $ \B $ satisfies \eqref{cond_for_A}, therefore,
\eqref{5.3}. Condition \eqref{5.2} is satisfied by $ \B $ by \ref{5.9} and
\ref{6.1}. Thus, Proposition \ref{5.4} extends $ \theta_1 $ (and $ \theta_2 $)
to a morphism $ \theta : \B \to \A $. It remains to prove that $ \theta $ is
an isomorphism.

Also $ \A $ satisfies \eqref{cond_for_A}, therefore, \eqref{5.3}. By Lemma
\ref{5.5}, $ \Im M_\A^{(n)} = \A_n $.
Using commutativity of the diagram
\[
\xymatrix{
 \B_1^{\otimes n} \ar[r]^{M_\B^{(n)}} \ar[d]_{\theta_1^{\otimes n}} & \B_n
 \ar[d]^{\theta_n}
\\
 \A_1^{\otimes n} \ar[r]^{M_\A^{(n)}} & \A_n
}
\]
and surjectivity of $ \theta_1 $ we see that $ \theta_n $ maps the
two-dimensional space $ \B_n $ onto the two-dimensional space $ \A_n
$. Therefore $ \theta_n $ is invertible, and so, the morphism $ \theta $ is an
isomorphism.
\end{proof}

\section[Classification]
 {\raggedright Classification}
\label{sec:7}
Combining \eqref{the_construction}, the cases constituting the proof of Proposition
\ref{3.14}, and Lemma \ref{3.13} we get the following classification results.

\begin{center}\textsc{graded algebras}\end{center}

Every graded algebra satisfying \eqref{cond_for_A} is isomorphic to one and
only one of the graded algebras $ \B_1 $, $ \B_2 $, $ \B_3(\la) $, $ \B_4 $, $
\B_5 $ defined below ($ \la $ runs over $ \C \setminus \{0\} $). In all cases
$ A_0 = \C $ and $ \A_t = \C^2 $ for $ t = 1,2,\dots $ For every $ s,t \in
\{1,2,\dots\} $ the product of $ (a_1,a_2) \in \A_s $ and $ (b_1,b_2) \in \A_t
$ is $ (c_1,c_2) \in \A_{s+t} $ where
\begin{align*}
& c_1 = a_1 b_1 \, , \quad c_2 = a_2 b_2 & & \text{for } \B_1 \, , \\
& \begin{cases}
 c_1 = a_1 b_1 \, , \; c_2 = a_2 b_2 &\text{if $ s $ is even},\\
 c_1 = a_1 b_2 \, , \; c_2 = a_2 b_1 &\text{if $ s $ is odd}
\end{cases} & & \text{for } \B_2 \, , \\
& c_1 = a_1 b_1 \, , \quad c_2 = a_2 b_1 + \la a_1 b_2 & & \text{for } \B_3(\la) \, , \\
& c_1 = a_1 b_1 \, , \quad c_2 = a_2 b_1 & & \text{for } \B_4 \, , \\
& c_1 = a_1 b_1 \, , \quad c_2 = a_1 b_2 & & \text{for } \B_5 \, .
\end{align*}

\begin{center}\textsc{subproduct systems}\end{center}

Every subproduct system (as defined in \ref{subproduct_system}) is isomorphic
to one and only one of the subproduct systems $ \Ec_1 $, $ \Ec_2 $, $ \Ec_3(\la)
$, $ \Ec_4 $, $ \Ec_5 $ defined below ($ \la $ runs over $ \C \setminus \{0\}
$). In all cases $ E_t = \C^2 $ for $ t = 1,2,\dots $, and $ e_1 = (1,0) $, $
e_2 = (0,1) $ are the basis vectors. For every $ s,t \in \{1,2,\dots\} $
\begin{align*}
& \be_{s,t} (e_1) = e_1 \otimes e_1 \, , \quad \be_{s,t} (e_2) = e_2 \otimes e_2
 & & \text{for } \Ec_1 \, , \\
& \be_{s,t} (e_1) = \begin{cases}
 e_1 \otimes e_1 &\text{if $ s $ is even},\\
 e_1 \otimes e_2 &\text{if $ s $ is odd},
\end{cases} \quad
\be_{s,t} (e_2) = \begin{cases}
 e_2 \otimes e_2 &\text{if $ s $ is even},\\
 e_2 \otimes e_1 &\text{if $ s $ is odd}
\end{cases}
 & & \text{for } \Ec_2 \, , \\
& \be_{s,t} (e_1) = e_1 \otimes e_1 \, , \quad \be_{s,t} (e_2) = e_2 \otimes e_1
 + \la^s e_1 \otimes e_2 & & \text{for } \Ec_3(\la) \, , \\
& \be_{s,t} (e_1) = e_1 \otimes e_1 \, , \quad \be_{s,t} (e_2) = e_2 \otimes e_1
 & & \text{for } \Ec_4 \, , \\
& \be_{s,t} (e_1) = e_1 \otimes e_1 \, , \quad \be_{s,t} (e_2) = e_1 \otimes e_2
 & & \text{for } \Ec_5 \, .
\end{align*}

\appendix

\section[Appendix: calculating determinants]
 {\raggedright Appendix: calculating determinants}
\label{sec:a}
A proof of the equality
\[
D_8 + D_4 = 0
\]
is sketched here. Recall that $ D_8 $ is defined by \eqref{det8}, and $ D_4 $
is defined by \eqref{2.5}--\eqref{2.7}. The equality is stated for all $ a $,
$ b $, $ c $, $ d $, $ e $, $ f $, $  g $, $ h $, $ A $, $ B $, $ C $, $ D $,
$ E $, $ F $, $ G $, $ H $.

We use the Laplace expansion\footnote{%
 See e.g. \cite[Sect.~1.6, page 50]{Ku}.}
by the first four rows:
\[
D_8 = \sum_{i,j,k,l} (-1)^{i+j+k+l} A_{i,j,k,l} B_{i,j,k,l}
\]
where the sum is taken over $ 1 \le i < j < k < l \le 8 $; $ A_{i,j,k,l} $ is
the minor in rows $ 1,2,3,4 $ and columns $ i,j,k,l $; and $ B_{i,j,k,l} $ is
the complementary minor (in rows $ 5,6,7,8 $ and the remaining columns). The
sum contains $ \binom 8 4 $ terms, but most of them vanish. Namely, $
A_{i,j,k,l} \ne 0 $ only if $ | \{ i,j,k,l \} \cap \{ 1,2,3,4 \} | = 2 $ (that
is, exactly two of $ i,j,k,l $ do not exceed $ 4 $), since otherwise the minor
contains two linearly dependent rows (each with a single non-zero
element). Similarly, $ B_{i,j,k,l} \ne 0 $ only if $ | \{ i,j,k,l \} \cap \{
1,2,5,6 \} | = 2 $. Thus, $ 18 $ terms survive:
\begin{align*}
& i=1, \, j=2, \, k=7, \, l=8; \quad & & \text{1 term} \\
& i=3, \, j=4, \, k=5, \, l=6; \quad & & \text{1 term} \\
& 1 \le i \le 2, \, 3 \le j \le 4, \, 5 \le k \le 6, \, 7 \le l \le 8. \quad & & \text{16 terms} \\
\end{align*}
We have
\[
A_{1,2,7,8} = A_{3,4,5,6} =
 \begin{vmatrix} a & b \\ e & f \end{vmatrix}
 \begin{vmatrix} c & d \\ g & h \end{vmatrix}
 \, , \quad
B_{1,2,7,8} = B_{3,4,5,6} =
 \begin{vmatrix} A & B \\ E & F \end{vmatrix}
 \begin{vmatrix} C & D \\ G & H \end{vmatrix}
 \, ,
\]
thus the first $ 2 $ terms contribute
\[
S_2 = A_{1,2,7,8} B_{1,2,7,8} + A_{3,4,5,6} B_{3,4,5,6} = 2
 \begin{vmatrix} a & b \\ e & f \end{vmatrix}
 \begin{vmatrix} c & d \\ g & h \end{vmatrix}
 \begin{vmatrix} A & B \\ E & F \end{vmatrix}
 \begin{vmatrix} C & D \\ G & H \end{vmatrix}
 \, .
\]
Using $ p,q,r $ of \eqref{2.5} and in addition
\[
q_1 = \begin{vmatrix} a & d \\ e & h \end{vmatrix} \, ,
\quad q_2 = \begin{vmatrix} b & c \\ f & g \end{vmatrix}
\]
we have for $ 1 \le i \le 2 $, $ 3 \le j \le 4 $, $ 5 \le k \le 6 $, $ 7 \le l
\le 8 $
\begin{gather*}
A_{i,j,k,l} = \al_{i,j} \al_{k-4,l-4} \quad \text{where} \\
\al_{1,3} = p \, , \quad \al_{1,4} = q_1 \, , \quad \al_{2,3} = q_2 \, , \quad
 \al_{2,4} = r \, .
\end{gather*}
Similarly,
\begin{gather*}
B_{i,j,k,l} = - \be_{i,k} \be_{j-2,l-2} \quad \text{where} \\
\be_{1,5} = R \, , \quad \be_{1,6} = Q_2 \, , \quad \be_{2,5} = Q_1 \, , \quad
 \be_{2,6} = P \, .
\end{gather*}
Thus the last $ 16 $ terms contribute
\begin{alignat*}{4}
S_{16} = & - ppRR && + pq_1 RQ_2 && + pq_2 Q_2R && - pr Q_2Q_2 \\
         & + q_1p RQ_1 && - q_1q_1 RP && - q_1q_2 Q_2Q_1 && + q_1r Q_2P \\
         & + q_2p Q_1R && - q_2q_1 Q_1Q_2 && - q_2q_2 PR && + q_2r PQ_2 \\
         & - rp Q_1Q_1 && + rq_1 Q_1P && + rq_2 PQ_1 && - rr PP = S_+ - S_- \, ,
\end{alignat*}
where $ S_+ $ is the sum of the $ 8 $ ``positive'' terms (with the plus sign)
and $ (-S_-) $ is the sum of the $ 8 $ ``negative'' terms. Taking into account
that $ q_1 + q_2 = q $ and $ Q_1 + Q_2 = Q $ we combine the $ 8 $ ``positive''
terms into
\[
S_+ = pqQR + qrPQ \, .
\]
Other terms give
\[
S_- = p^2 R^2 + r^2 P^2 + pr (Q_1^2+Q_2^2) + (q_1^2+q_2^2) PR + 2 q_1q_2
Q_1Q_2 \, .
\]
We expand $ D_4 $ by the first and third columns,
\begin{multline*}
D_4 = \\
      - \begin{vmatrix} p & r \\ 0 & q \end{vmatrix}
        \begin{vmatrix} Q & 0 \\ P & R \end{vmatrix} +
        \begin{vmatrix} p & r \\ P & R \end{vmatrix}
        \begin{vmatrix} p & r \\ P & R \end{vmatrix} -
        \begin{vmatrix} p & r \\ 0 & Q \end{vmatrix}
        \begin{vmatrix} p & r \\ Q & 0 \end{vmatrix} -
        \begin{vmatrix} 0 & q \\ P & R \end{vmatrix}
        \begin{vmatrix} q & 0 \\ P & R \end{vmatrix} -
        \begin{vmatrix} P & R \\ 0 & Q \end{vmatrix}
        \begin{vmatrix} q & 0 \\ p & r \end{vmatrix} \\
=       \begin{vmatrix} p & r \\ P & R \end{vmatrix}^2 -
        \begin{vmatrix} p & q \\ P & Q \end{vmatrix}
        \begin{vmatrix} q & r \\ Q & R \end{vmatrix} \, .
\end{multline*}
Thus,
\begin{multline*}
D_4 + S_{16} = ( p^2 R^2 - 2pr PR + r^2 P^2 - pq QR + pr Q^2 + q^2 PR - qr PQ
 ) + \\
+ ( pq QR + qr PQ ) - \\
- \( p^2 R^2 + r^2 P^2 + pr ( Q_1^2 + Q_2^2 ) + ( q_1^2 + q_2^2 ) PR + 2 q_1
 q_2 Q_1 Q_2 ) = \\
= pr ( Q^2 - Q_1^2 - Q_2^2 ) + ( q^2 - q_1^2 - q_2^2 ) PR - 2 pr PR - 2 q_1
 q_2 Q_1 Q_2 = \\
= 2 pr Q_1 Q_2 + 2 q_1 q_2 PR - 2 pr PR - 2 q_1 q_2 Q_1 Q_2 = \\
= -2 ( pr - q_1 q_2 ) ( PR - Q_1 Q_2 ) \, .
\end{multline*}
However,
\[
pr - q_1 q_2 = \begin{vmatrix} a & c \\ e & g \end{vmatrix}
               \begin{vmatrix} b & d \\ f & h \end{vmatrix} -
               \begin{vmatrix} a & d \\ e & h \end{vmatrix}
               \begin{vmatrix} b & c \\ f & g \end{vmatrix} =
               \begin{vmatrix} c & d \\ g & h \end{vmatrix}
               \begin{vmatrix} a & b \\ e & f \end{vmatrix} \, ;
\]
the same holds for $ PR - Q_1 Q_2 $. Finally,
\begin{multline*}
D_8 + D_4 = S_2 + S_{16} + D_4 = \\
= 2 \begin{vmatrix} a & b \\ e & f \end{vmatrix}
  \begin{vmatrix} c & d \\ g & h \end{vmatrix}
  \begin{vmatrix} A & B \\ E & F \end{vmatrix}
  \begin{vmatrix} C & D \\ G & H \end{vmatrix} -
2 \begin{vmatrix} c & d \\ g & h \end{vmatrix}
  \begin{vmatrix} a & b \\ e & f \end{vmatrix} \cdot
  \begin{vmatrix} C & D \\ G & H \end{vmatrix}
  \begin{vmatrix} A & B \\ E & F \end{vmatrix} = 0 \, .
\end{multline*}

\bigskip
\filbreak
{
\small
\begin{sc}
\parindent=0pt\baselineskip=12pt
\parbox{4in}{
Boris Tsirelson\\
School of Mathematics\\
Tel Aviv University\\
Tel Aviv 69978, Israel
\smallskip
\par\quad\href{mailto:tsirel@post.tau.ac.il}{\tt
 mailto:tsirel@post.tau.ac.il}
\par\quad\href{http://www.tau.ac.il/~tsirel/}{\tt
 http://www.tau.ac.il/\textasciitilde tsirel/}
}

\end{sc}
}
\filbreak

\end{document}